\def\dbE{\mathbb{E}}     
\def\dbF{\mathbb{F}}   \def\cF{{\cal F}}  
   \def\cH{{\cal H}}
 \def\sN{\mathscr{N}}    
\def\dbP{\mathbb{P}}     
\def\dbR{\mathbb{R}} \def\sR{\mathscr{R}}    
\def\dbS{\mathbb{S}}
      \def\lt{\left}       \def\hb{\hbox}
\def\ms{\medskip}        \def\rt{\right}      \def\ae{\hbox{\rm a.e.}}
        \def\lan{\langle}    \def\as{\hbox{\rm a.s.}}
   \def\ran{\rangle}    \def\tr{\hbox{\rm tr$\,$}}
\def\ts{\textstyle}         \def\rank{\hbox{\rm rank\,}}
\def\no{\noindent}
\def\rf{\eqref}            
\def\deq{\triangleq}     \def\({\Big (}       
\def\les{\leqslant}      \def\){\Big )}       
\def\ges{\geqslant}      \def\[{\Big[}        
          \def\]{\Big]}        \def\ba{\begin{array}}
      \def\q{\quad}        \def\ea{\end{array}}
\def\h{\widehat}         \def\qq{\qquad}      \def\1n{\negthinspace}
           \def\2n{\1n\1n}      \def\bel{\begin{equation}\label}
         \def\3n{\1n\2n}      \def\ee{\end{equation}}
\def\hp{\hphantom}       \def\nn{\nonumber}   \def\cl{\overline}
        \def\G{\Gamma}      \def\Om{\Omega}   \def\om{\omega}
\def\b{\beta}            \def\d{\delta}   \def\F{\Phi}     
           \def\th{\theta}  \def\Si{\Sigma}  
\def\e{\varepsilon}     \def\l{\lambda}        
         \def\f{\varphi}  \def\i{\infty}   
\newtheoremstyle{indented}{}{}{\it}{\parindent}{\bfseries}{.}{.5em}{}
\theoremstyle{indented}
\newtheorem{theorem}{Theorem}[section]
\newtheorem{definition}[theorem]{Definition}
\newtheorem{proposition}[theorem]{Proposition}
\newtheorem{corollary}[theorem]{Corollary}
\newtheorem{lemma}[theorem]{Lemma}
\newtheorem{remark}[theorem]{Remark}
\newenvironment{taggedthm}[1]
 {\taggedthmx}
 {\endtaggedthmx}
   \newcommand{\setword}[2]{%
   \phantomsection
   #1\def\@currentlabel{\unexpanded{#1}}\label{#2}%
   }
\begin{document}

\title{\bf Optimal Control for Controllable Stochastic Linear Systems}
\author{
 Xiuchun Bi\thanks{Department of Statistics and Finance, University of Science and Technology of China,
                   Hefei, Anhui, 230026, China (xcbi@ustc.edu.cn).} \and
Jingrui Sun\thanks{Corresponding author. Department of Mathematics, Southern University of Science and Technology,
                   Shenzhen, Guangdong, 518055, China (sunjr@sustech.edu.cn).} \and
  Jie Xiong\thanks{Department of Mathematics, Southern University of Science and Technology,
                   Shenzhen, Guangdong, 518055, China (xiongj@sustech.edu.cn).}
}

\maketitle

\no{\bf Abstract.}
This paper is concerned with a constrained stochastic linear-quadratic optimal control problem,
in which the terminal state is fixed and the initial state is constrained to lie in a stochastic linear manifold.
The controllability of stochastic linear systems is studied.
Then the optimal control is explicitly obtained by considering a parameterized unconstrained backward LQ problem
and an optimal parameter selection problem.
A notable feature of our results is that, instead of solving an equation involving derivatives with respect to
the parameter, the optimal parameter is characterized by an {\it algebraic} equation.

\ms

\no{\bf Key words.}
linear-quadratic, optimal control, controllability, controllability Gramian, Lagrange multiplier,
optimal parameter, Riccati equation.

\ms

\no{\bf AMS subject classifications.}
49N10, 93E20, 93E24, 93B05.

\section{Introduction}\label{Sec:Intro}

Let $(\Om,\cF,\dbP)$ be a complete probability space on which a standard one-dimensional Brownian motion
$W=\{W(t);0\les t<\i\}$ is defined, and let $\dbF=\{\cF_t\}_{t\ges0}$ be the natural filtration of $W$
augmented by all the $\dbP$-null sets in $\cF$.
Consider the following controlled linear stochastic differential equation (SDE, for short) on a finite
horizon $[t,T]$:
\bel{state} dx(s) = [A(s)x(s)+B(s)u(s)]ds + [C(s)x(s)+D(s)u(s)]dW(s), \q s\in[t,T], \ee
where $A,C:[0,T]\to\dbR^{n\times n}$ and $B,D:[0,T]\to\dbR^{n\times m}$, called the
{\it coefficients} of the {\it state equation} \rf{state}, are given deterministic functions.
The solution $x=\{x(s);t\les s\les T\}$ of \rf{state}, which takes values in $\dbR^n$, is called a
{\it state process}, and the process $u=\{u(s);t\les s\les T\}$, which takes values in $\dbR^m$ and is
$\dbF$-progressively measurable, is called a {\it control}.
For a given initial condition $x(t)=\xi$, the state process $x$ is uniquely determined by the control $u$,
and is often denoted by $x^{t,\xi,u}$ when it is necessary to underline the dependence on the {\it initial pair}
$(t,\xi)$ and the control $u$.
In this paper we shall assume that the coefficients of the state equation \rf{state} satisfy the following condition:
\begin{itemize}
\item[{\setword{\bf(A1)}{(A1)}}] $A,C:[0,T]\to\dbR^{n\times n}$ and $B,D:[0,T]\to\dbR^{n\times m}$
are bounded, Lebesgue measurable functions.
\end{itemize}
According to the standard result for SDEs (see, for example, \cite[Chapter 1, Theorem 6.3]{Yong-Zhou 1999}),
such a condition ensures that a unique $p$th power integrable solution exists for the SDE \rf{state} whenever
the {\it initial state} $x(t)=\xi$ and the control $u$ are $p$th power integrable.
We are interested in the case $p=2$, in which the spaces of initial states, {\it admissible controls} and
state processes are
\begin{align*}
L^2_{\cF_t}(\Om;\dbR^n)
  &\ts=\Big\{\xi:\Om\to\dbR^n\bigm|\xi~\hbox{is $\cF_t$-measurable with}~\dbE|\xi|^2<\i\Big\},\\
L_\dbF^2(t,T;\dbR^m)
  &\ts=\Big\{u:[t,T]\times\Om\to\dbR^m\bigm|u~\hbox{is $\dbF$-progressively measurable} \\
  &\ts\hp{=\Big\{\ } \hbox{with}~\dbE\int_t^T|u(s)|^2ds<\i\Big\}, ~\hbox{and}\\
L_\dbF^2(\Om;C([t,T];\dbR^n))
   &\ts=\Big\{x:[t,T]\times\Om\to\dbR^n\bigm|x~\hbox{is $\dbF$-adapted and continuous} \\
   &\ts\hp{=\Big\{\ } \hbox{with}~\dbE\big[\sup_{t\les s\les T}|x(s)|^2\big]<\i\Big\},
\end{align*}
respectively.

\medskip

Let $F\in\dbR^{k\times n}$ $(k\les n)$ be a matrix, and let $b\in L^2_{\cF_t}(\Om;\dbR^k)$ be a random variable.
We denote by $\cH(F,b)$ the {\it stochastic linear manifold}
$$\{\xi\in L^2_{\cF_t}(\Om;\dbR^n): F\xi=b\}.$$
The problems of interest here are those for which the control $u$ is required to drive the system \rf{state}
to a particular state at the end of the interval $[t,T]$ from a given stochastic linear manifold $\cH(F,b)$ and the cost
functional is of the quadratic form
%
\begin{equation}\label{cost}
J(t,u) = \dbE\left\{\lan Gx(t),x(t)\ran
         + \int_t^T\lan Q(s)x(s),x(s)\ran+ \lan R(s)u(s),u(s)\ran ds\right\},
\end{equation}
where the {\it weighting matrices} $G$, $Q$, and $R$ are assumed to satisfy the following condition:
\begin{itemize}
\item[{\setword{\bf(A2)}{(A2)}}] $G\in\dbR^{n\times n}$ is a symmetric matrix;
$Q:[0,T]\to\dbR^{n\times n}$ and $R:[0,T]\to\dbR^{m\times m}$ are bounded, symmetric functions.
Moreover, for some real number $\d>0$,
$$G\ges 0, \q Q(s)\ges0, \q R(s)\ges \d I_m, \q\ae~s\in[0,T].$$
\end{itemize}
%
For a precise statement, we pose the following {\it constrained stochastic linear-quadratic (LQ, for short) optimal control problem}.

\begin{taggedthm}{Problem (CLQ)}\label{Prob}
For a given target $\eta\in L^2_{\cF_T}(\Om;\dbR^n)$, find a control
$u^*\in L_\dbF^2(t,T;\dbR^m)$ such that the cost functional $J(t,u)$ is minimized over
$L_\dbF^2(t,T;\dbR^m)$, subject to the following constraints on the initial and terminal states:
\bel{constraint} x(t)\in\cH(F,b), \q x(T)=\eta.\ee
\end{taggedthm}

A control $u^*\in L_\dbF^2(t,T;\dbR^m)$ that minimizes $J(t,u)$ subject to \rf{constraint} will be
called an {\it optimal control} with respect to the target $\eta$;
the corresponding state process will be called an {\it optimal state process}.
If an initial state $\xi\in\cH(F,b)$ is transferred to the target $\eta$ by an optimal control,
we call $\xi$ an {\it optimal initial state}.


\medskip

If the constraint \rf{constraint} is absent, but the initial state $x(t)=\xi$ is given,\autoref{Prob}
becomes a standard stochastic LQ optimal control problem.
Such kind of problems was initiated by Wonham \cite{Wonham 1968} and was later investigated by many
researchers; see, for example, Bismut \cite{Bismut 1978}, Bensoussan \cite{Bensoussan 1982},
Chen and Yong \cite{Chen-Yong 2001}, Ait Rami, Moore, and Zhou \cite{Ait Rami-Moore-Zhou 2002},
Tang \cite{Tang 2003}, Yu \cite{Yu 2013}, Sun, Li, and Yong \cite{Sun-Li-Yong 2016},
L\"{u}, Wang, and Zhang \cite{Lu-Wang-Zhang 2017}, Sun, Xiong, and Yong \cite{Sun-Xiong-Yong 2019},
Wang, Sun, and Yong \cite{Wang-Sun-Yong 2019}, and the references therein.
In contrast, much less progress has been made on the constrained LQ problem for stochastic systems.
This problem is particularly difficult in the stochastic setting since not only is one required to
decide whether a state of the stochastic system can be transferred to another state,
but in addition an optimal parameter must be evaluated.

\medskip

There were some attempts in attacking the constrained stochastic LQ optimal control problem
in the special case of norm optimal control; see, for instance, Gashi \cite{Gashi 2015},
Wang and Zhang \cite{Wang-Zhang 2015}, and Wang et al. \cite{Wang-Yang-Yong-Yu 2017}.
However, in these works the state process is required to start from a {\it particular point},
and the optimal control is either characterized {\it implicitly} in terms of {\it coupled}
forward-backward stochastic differential equations (FBSDEs, for short), which are difficult to solve,
or explicitly obtained but under a {\it strong} assumption that the stochastic system is
{\it exactly controllable} (which means a target can be reached from any initial state).

\medskip

This paper aims to provide a complete solution to\autoref{Prob}, a class of stochastic LQ
optimal control problems with fixed terminal states.
A distinctive feature of the problem under consideration is that the state process is allowed
to start from a stochastic linear manifold $\cH(F,b)$, instead of a fixed initial state.
Clearly, our problem contains the norm optimal control as a particular case.
Another feature is that the stochastic system is {\it not} assumed to be exactly controllable.
The initial states outside the stochastic linear manifold $\cH(F,b)$ are irrelevant to our problem,
so figuring out when the target can be reached from $\cH(F,b)$ will be enough to tackle\autoref{Prob}.

\medskip

The principal method adopted in the paper is combination of Lagrange multipliers and
unconstrained backward LQ problems.
By introducing a parameter $\l$, the Lagrange multiplier,\autoref{Prob} is reduced to a parameterized
unconstrained backward LQ problem, whose optimal control and value function $V_\l$ can be
constructed explicitly using the solutions to a Riccati equation and a decoupled FBSDE.
Then the optimal state process $x^*_\l$ of the derived backward LQ problem is proved to be
also optimal for\autoref{Prob} if the parameter $\l$ is such that $x^*_\l(t)\in\cH(F,b)$.
In order to find such a parameter, called an {\it optimal parameter}, a first idea is to solve
the equation ${d\over d\l}V_\l=0$.
However, this does not work well in our situation, due to the difficulty in computing the derivative of $V_\l$.
Our approach for finding the optimal parameter is based on a refinement (\autoref{prop:controllability})
of Liu and Peng's result \cite[Theorem 2]{Liu-Peng 2010}.
The key is to establish an equivalence relationship between the controllability of the original system
and a system involving $\Si$, the solution of a Riccati equation (\autoref{prop:x=hatx}).
By observing that the controllability Gramian of the new system is exactly $\Si(t)$ (\autoref{prop:Gramian-Si}),
we show that an optimal parameter can be obtained by solving an algebraic equation (\autoref{thm:main}).

\medskip

The rest of the paper is organized as follows.
In \autoref{Sec:Preliminary}, we collect some preliminary results.
\autoref{Sec:controllability} is devoted to the study of controllability of stochastic linear systems.
In \autoref{Sec:Lagrange}, using Lagrange multipliers, we reduce the problem to a parameterized
unconstrained backward LQ problem and an optimal parameter selection problem.
Finally, we discuss how to find an optimal parameter and present the complete solution to\autoref{Prob}
in \autoref{Sec:Main-result}.

\section{Preliminaries}\label{Sec:Preliminary}

Let $\dbR^{n\times m}$ be the Euclidean space consisting of $n\times m$ real matrices,
and let $\dbR^n=\dbR^{n\times1}$.
The inner product of $M,N\in\dbR^{n\times m}$, denoted by $\lan M,N\ran$, is given by
$\lan M,N\ran=\tr(M^\top N)$, where $M^\top$ is transpose of $M$ and $\tr(M^\top N)$
stands for the trace of $M^\top N$.
This inner product induces the Frobenius norm $|M|=\sqrt{\tr(M^\top M)}$.
Denote by $\dbS^n$ the space of all symmetric $n\times n$ real matrices,
and by $\dbS^n_+$ the space of all symmetric positive definite $n\times n$ real matrices.
For $\dbS^n$-valued functions $M$ and $N$, we write $M\ges N$ (respectively, $M>N$)
if $M-N$ is positive semidefinite (respectively, positive definite) almost everywhere.
The identity matrix of size $n$ is denoted by $I_n$.

\medskip

We now present some lemmas that are useful in the subsequent sections. Consider the linear BSDE
\bel{Y-Formula}\left\{\begin{aligned}
 dY(s) &= \big[A(s)Y(s)+C(s)Z(s)+f(s)\big]ds + Z(s)dW(s), \q s\in[0,T], \\
  Y(T) &= \eta.
\end{aligned}\right.\ee
The following result, coming from the idea of proving the well-posedness of linear BSDEs (see \cite[Chapter 7, Theorem 2.2]{Yong-Zhou 1999}),
provides a formula for the first component of the adapted solution $(Y,Z)$ to the BSDE \rf{Y-Formula}.

\begin{lemma}\label{lemma:Y-formula}
Let {\rm\ref{(A1)}} hold, and let $f\in L_\dbF^2(0,T;\dbR^n)$, $\eta\in L_{\cF_T}^2(\Om;\dbR^n)$.
Then the first component $Y$ of the adapted solution to \rf{Y-Formula} has the following representation:
$$ Y(t) = \dbE\lt[\G(t,T)\eta-\int_t^T\G(t,s)f(s)ds\bigg|\cF_t\rt], \q t\in[0,T], $$
where $\G(t,s)\deq\G(t)^{-1}\G(s)$ with $\G=\{\G(s);0\les s\les T\}$ being the solution to
$$\left\{\begin{aligned}
 d\G(s) &= -\G(s)A(s)ds -\G(s)C(s)dW(s), \q s\in[0,T], \\
  \G(0) &= I_n.
\end{aligned}\right.$$
\end{lemma}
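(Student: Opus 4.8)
The plan is to verify the representation formula by showing that $Y(t)$ as defined through the process $\G$ solves the BSDE \rf{Y-Formula}, using $\G$ as an integrating factor. The key idea is that $\G$ is designed precisely so that the product $\G(s)Y(s)$ has a drift that isolates the inhomogeneous term $f$. First I would compute the differential of the product $\G(s)Y(s)$ by It\^o's formula. Since $\G$ satisfies $d\G(s) = -\G(s)A(s)\,ds - \G(s)C(s)\,dW(s)$ and $Y$ satisfies $dY(s) = [A(s)Y(s)+C(s)Z(s)+f(s)]\,ds + Z(s)\,dW(s)$, the It\^o product rule gives three contributions: the term $d\G(s)\,Y(s)$, the term $\G(s)\,dY(s)$, and the cross-variation term $d\G(s)\,dY(s)$. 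The drift from $d\G \cdot Y$ is $-\G A Y$, the drift from $\G\, dY$ is $\G(AY+CZ+f)$, and the quadratic covariation of the two martingale parts contributes $-\G C Z$. The crucial cancellation is that $-\G A Y + \G A Y = 0$ and $\G C Z - \G C Z = 0$, leaving only $\G(s)f(s)\,ds$ in the drift.

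Carrying this out, the differential becomes
\bel{product-diff}
 d\big[\G(s)Y(s)\big] = \G(s)f(s)\,ds + \big[-\G(s)C(s)Y(s)+\G(s)Z(s)\big]\,dW(s).
\ee
Integrating \rf{product-diff} from $t$ to $T$ and using the terminal condition $Y(T)=\eta$ yields
$$ \G(T)\eta - \G(t)Y(t) = \int_t^T\G(s)f(s)\,ds + \int_t^T\big[-\G(s)C(s)Y(s)+\G(s)Z(s)\big]\,dW(s). $$
Taking conditional expectation given $\cF_t$, the stochastic integral vanishes (being a martingale started at $t$), and since $Y(t)$ is $\cF_t$-measurable while $\G(t)$ is $\cF_t$-measurable and invertible, I would rearrange to solve for $Y(t)$, then left-multiply by $\G(t)^{-1}$ and use the definition $\G(t,s)=\G(t)^{-1}\G(s)$ to obtain the claimed formula.

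The main technical obstacle is justifying that the stochastic integral $\int_t^T[-\G C Y+\G Z]\,dW$ is a genuine martingale, so that its conditional expectation given $\cF_t$ is zero. This requires the integrand to lie in $L^2_\dbF$, which in turn needs integrability estimates on $\G$ and on the solution $(Y,Z)$. Under {\rm\ref{(A1)}}, the coefficients $A$ and $C$ are bounded, so $\G$ and its inverse have finite moments of all orders by standard linear SDE estimates; combined with the standing assumptions $f\in L^2_\dbF(0,T;\dbR^n)$ and $\eta\in L^2_{\cF_T}(\Om;\dbR^n)$, which guarantee $(Y,Z)\in L^2_\dbF(\Om;C([0,T];\dbR^n))\times L^2_\dbF(0,T;\dbR^n)$ by well-posedness of linear BSDEs, a Cauchy--Schwarz argument gives the required square-integrability. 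A minor additional point is to confirm that $\G$ is indeed invertible for every $s$; this follows because $\G$ solves a linear SDE with bounded coefficients, whose fundamental solution is almost surely nonsingular. Once these integrability and invertibility facts are in hand, the computation is routine and the formula follows directly.
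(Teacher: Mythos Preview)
Your approach is essentially identical to the paper's: apply It\^o's product rule to $\G(s)Y(s)$, integrate over $[t,T]$, take conditional expectation, and invert $\G(t)$. One small technical imprecision: you assert that the integrand $\G(Z-CY)$ lies in $L^2_\dbF$, but with only $L^2$ bounds on $(Y,Z)$ and moment bounds on $\G$, a Cauchy--Schwarz argument does not quite deliver $\dbE\int_t^T|\G(Z-CY)|^2ds<\infty$ (you would need $L^4$ control on $Y,Z$). The paper instead verifies the weaker condition $\dbE\big[\big(\int_0^T|\G(Z-CY)|^2ds\big)^{1/2}\big]<\infty$, which follows from Cauchy--Schwarz applied to $\sup_s|\G(s)|$ and $(\int|Z-CY|^2ds)^{1/2}$, and which suffices (via BDG) to conclude that the stochastic integral is a true martingale.
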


\begin{proof}
Let $\th=\G(T)\eta-\int_0^T\G(s)f(s)ds$. By It\^{o}'s formula,
\begin{align*}
d\G Y  &=  -\G AYds - \G CYdW + \G(AY + CZ + f)ds + \G ZdW - \G CZds \\
       &=  \G fds +\G(Z-CY)dW,
\end{align*}
from which it follows that
\begin{align}\label{FY--17Feb18}
\G(t)Y(t)  &= \G(T)\eta - \int_t^T\G(s)f(s)ds - \int_t^T\G(s)\big[Z(s)-C(s)Y(s)\big]dW(s) \nn\\
           &= \th + \int_0^t\G(s)f(s)ds - \int_t^T\G(s)\big[Z(s)-C(s)Y(s)\big]dW(s).
\end{align}
Note that
$$\dbE\lt(\int_0^T\big|\G(s)[Z(s)-C(s)Y(s)]\big|^2ds\rt)^{1\over 2}<\i.$$
Hence, the process
$$M(t)\equiv\int_0^t\G(s)\big[Z(s)-C(s)Y(s)\big]dW(s),\q 0\les t\les T$$
is a martingale, and by taking conditional expectations with respect to $\cF_t$
on both sides of \rf{FY--17Feb18}, we obtain
$$\G(t)Y(t) = \dbE[\th|\cF_t] + \int_0^t\G(s)f(s)ds = \dbE\lt[\G(T)\eta-\int_t^T\G(s)f(s)ds\bigg|\cF_t\rt],$$
from which the desired result follows.
\end{proof}

We conclude this section with a simple but useful algebraic lemma.

\begin{lemma}\label{lmm:range=range}
Let $A\in\dbR^{m\times n}$ and $B\in\cl{\dbS^n_+}$. Then $ABA^\top$ and $AB$ have the same range space.
\end{lemma}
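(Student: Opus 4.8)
The plan is to prove the two inclusions $\range(ABA^\top)\subseteq\range(AB)$ and $\range(AB)\subseteq\range(ABA^\top)$ separately. The first one is trivial and uses nothing about $B$: since $ABA^\top=(AB)A^\top$, every vector of the form $ABA^\top x$ equals $(AB)(A^\top x)$, which lies in $\range(AB)$. So the whole content of the lemma is the reverse inclusion, and this is where the hypothesis $B\in\cl{\dbS^n_+}$ must be used (it is false for general $B$, e.g.\ $B$ skew-symmetric).

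For the reverse inclusion my idea is to factor through the symmetric positive semidefinite square root of $B$. Since $B\in\cl{\dbS^n_+}$, write $B=B^{1/2}B^{1/2}$ with $B^{1/2}\in\cl{\dbS^n_+}$, and set $M\deq AB^{1/2}\in\dbR^{m\times n}$. Then
$$ ABA^\top=AB^{1/2}B^{1/2}A^\top=(AB^{1/2})(AB^{1/2})^\top=MM^\top, \qquad AB=AB^{1/2}B^{1/2}=MB^{1/2}. $$
This reduces the problem to the classical fact that $\range(MM^\top)=\range(M)$ for an arbitrary real matrix $M$: granting it, we get $\range(AB)=\range(MB^{1/2})\subseteq\range(M)=\range(MM^\top)=\range(ABA^\top)$, which together with the first paragraph yields equality.

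It remains to record why $\range(MM^\top)=\range(M)$. The clean route is through orthogonal complements. Since $MM^\top$ is symmetric, $\range(MM^\top)^\perp=\ker(MM^\top)$. Moreover $\ker(MM^\top)=\ker(M^\top)$: if $MM^\top x=0$ then $|M^\top x|^2=x^\top MM^\top x=0$, so $M^\top x=0$, and the converse is immediate. Finally $\ker(M^\top)=\range(M)^\perp$. Chaining these, $\range(MM^\top)^\perp=\range(M)^\perp$, hence $\range(MM^\top)=\range(M)$.

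The only genuine obstacle is the reverse inclusion, and positive semidefiniteness enters at exactly one point: the factorization $B=B^{1/2}B^{1/2}$, equivalently the implication $v^\top Bv=0\Rightarrow Bv=0$ valid for $B\succeq0$. If one prefers to avoid the square root, the same implication can be applied directly with $v=A^\top x$ to show $\ker(ABA^\top)=\ker(BA^\top)$, and since $\range(ABA^\top)^\perp=\ker(ABA^\top)$ and $\range(AB)^\perp=\ker((AB)^\top)=\ker(BA^\top)$, the equality of these kernels gives the equality of the two range spaces. Either way the argument is short; the main point is simply recognizing that the square-root factorization turns the statement into the standard $\range(MM^\top)=\range(M)$ identity.
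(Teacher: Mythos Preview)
Your proof is correct and is essentially the same as the paper's: both hinge on factoring $B=CC^\top$ (you take $C=B^{1/2}$) and using $x^\top ABA^\top x=0\Rightarrow C^\top A^\top x=0$, with the paper phrasing this via the kernel equality $\ker(ABA^\top)=\ker(BA^\top)$—precisely the alternative you sketch in your final paragraph. Your primary route through $M=AB^{1/2}$ and $\range(MM^\top)=\range(M)$ is just a cosmetic repackaging of the same computation.
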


\begin{proof}
For a matrix $M$, let $\sR(M)$ and $\sN(M)$ denote the range and kernel of $M$, respectively.
Since $\sR(M)^\bot=\sN(M^\top)$ for any matrix $M$, it is suffice to prove
$$\sN(ABA^\top)=\sN(BA^\top).$$
Clearly, $\sN(BA^\top)\subseteq\sN(ABA^\top)$.
For the reverse inclusion, let $C\in\dbR^{n\times n}$ be such that $B=CC^\top$.
If $x\in\dbR^m$ is such that $ABA^\top x=0$, then
$$\big|C^\top A^\top x\big|^2=x^\top ACC^\top A^\top x=x^\top ABA^\top x=0.$$
Thus, $C^\top A^\top x=0$  and  hence $BA^\top x=CC^\top A^\top x=0$.
This shows that $\sN(ABA^\top)\subseteq\sN(BA^\top)$.
\end{proof}

%

\section{Controllability of linear stochastic systems}\label{Sec:controllability}

Consider the controlled linear stochastic differential system
\begin{equation}\label{[ACBD]}
dx(s)=[A(s)x(s)+B(s)u(s)]ds + [C(s)x(s)+D(s)u(s)]dW(s).
\end{equation}
Let $(t_0,x_0)\in [0,T)\times L^2_{\cF_{t_0}}(\Om;\dbR^n)$ be an initial pair,
and let $t_1\in(t_0,T]$ be the terminal time.
We know by the standard result for SDEs (\cite[Chapter 1, Theorem 6.3]{Yong-Zhou 1999})
that a solution $x^{t_0,x_0,u}\in L_\dbF^2(\Om;C([t_0,t_1];\dbR^n))$ uniquely exists
for any control $u\in L_\dbF^2(t_0,t_1;\dbR^m)$.
We are now concerned with the question of finding a control such that a given target
(terminal state) is reached on the terminal time.

\begin{definition}\rm
We say that a control $u\in L_\dbF^2(t_0,t_1;\dbR^m)$ {\it transfers the state of the system \rf{[ACBD]}
from $x_0\in L^2_{\cF_{t_0}}(\Om;\dbR^n)$ at $t=t_0$ to $x_1\in L^2_{\cF_{t_1}}(\Om;\dbR^n)$ at $t=t_1$} if
$$x^{t_0,x_0,u}(t_1)=x_1$$
almost surely. We then also say that {\it $u$ transfers $(t_0,x_0)$ to $(t_1,x_1)$},
or that {\it $(t_1,x_1)$ can be reached from $(t_0,x_0)$ by $u$}.
\end{definition}

\begin{definition}\label{def:controllability}\rm
System \rf{[ACBD]} is called {\it exactly controllable on $[t_0,t_1]$}, if for any
$x_0\in L^2_{\cF_{t_0}}(\Om;\dbR^n)$ and any $x_1\in L^2_{\cF_{t_1}}(\Om;\dbR^n)$
there exists a control $u\in L_\dbF^2(t_0,t_1;\dbR^m)$ transferring $(t_0,x_0)$ to $(t_1,x_1)$.
\end{definition}


It was shown in \cite{Peng 1994} and \cite{Liu-Peng 2010} that system \rf{[ACBD]}
is exactly controllable on some interval only if $D$ has full row rank and the number
of columns of $D$ is greater than the number of rows of $D$ (i.e., $m>n$).
Note that $\rank(D)=n$ means that $DD^\top$ is invertible.
For technical reasons, in the sequel we shall impose, in addition to $m>n$, the following
slightly stronger condition (which is usually referred to as the {\it nondegeneracy} condition):
for some $\d>0$,
\bel{rank(D)=n} D(s)D(s)^\top \ges\d I_n, \q\forall s\in[0,T]. \ee
This condition implies that we can find a bounded invertible function $M:[0,T]\to\dbR^{m\times m}$ such that
\bel{DM=} D(s)M(s)=(I_n,0_{n\times (m-n)}), \q\forall s\in[0,T].\ee
In order to study the controllability of system \rf{[ACBD]},
we write $B(s)M(s)=(K(s),L(s))$ with $K(s)$ and $L(s)$ taking values in $\dbR^{n\times n}$ and
$\dbR^{n\times(m-n)}$, respectively, and introduce the following controlled system:
\bel{sys-barX} d\bar x(s) = \big[\bar A(s)\bar x(s)+\bar B(s)\bar u(s)\big]ds + \bar D(s)\bar u(s)dW(s),\ee
where
\bel{barABD} \bar A=A-KC, \q \bar B=BM=(K,L), \q \bar D=DM=(I_n,0_{n\times (m-n)}). \ee
Note that if we write the control $\bar u$ as the form
$$\bar u(s) = \begin{pmatrix}z(s) \\ v(s)\end{pmatrix}; \q z(s)\in\dbR^n,~v(s)\in\dbR^{m-n},$$
the system \rf{sys-barX} simplifies to
\bel{[ACBD]*} d\bar x(s) = \big[\bar A(s)\bar x(s)+K(s)z(s)+L(s)v(s)\big]ds + z(s)dW(s).\ee

The following result establishes a connection between the controllability of systems \rf{[ACBD]} and \rf{[ACBD]*}.

\begin{proposition}\label{prop:[ACBD]-[ACBD]*}
Let $0\les t_0<t_1\les T$, $x_0\in L^2_{\cF_{t_0}}(\Om;\dbR^n)$ and $x_1\in L^2_{\cF_{t_1}}(\Om;\dbR^n)$.
For system \rf{[ACBD]*}, a control $(z,v)\in L_\dbF^2(t_0,t_1;\dbR^n)\times L_\dbF^2(t_0,t_1;\dbR^{m-n})$
transfers $(t_0,x_0)$ to $(t_1,x_1)$ if and only if the control defined by
$$u(s)\deq M(s)\begin{pmatrix} z(s)-C(s)\bar x(s) \\ v(s)\end{pmatrix},\q s\in[t_0,t_1]$$
does so for system \rf{[ACBD]}, where $\bar x$ is the solution of \rf{[ACBD]*} with initial state $x_0$.
\end{proposition}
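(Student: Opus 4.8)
The plan is to prove the equivalence by showing that the control $u$ defined in the statement reproduces the very same trajectory $\bar x$ when it is fed into the original system \rf{[ACBD]}. Once I establish that $x^{t_0,x_0,u}=\bar x$ on the whole interval $[t_0,t_1]$, the two transfer conditions $x^{t_0,x_0,u}(t_1)=x_1$ and $\bar x(t_1)=x_1$ become literally the same statement, so both implications of the ``if and only if'' follow simultaneously. Thus the whole proof reduces to one substitution, validated by the uniqueness theorem for \rf{[ACBD]}.

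First I would record that $u$ is admissible. Since $M$ and $C$ are bounded, $\bar x\in L_\dbF^2(\Om;C([t_0,t_1];\dbR^n))$, $z\in L_\dbF^2(t_0,t_1;\dbR^n)$ and $v\in L_\dbF^2(t_0,t_1;\dbR^{m-n})$, the defining formula for $u$ shows that $u$ is $\dbF$-progressively measurable and square-integrable, hence $u\in L_\dbF^2(t_0,t_1;\dbR^m)$. This check is routine but is needed so that the existence-and-uniqueness result applies to the pair $(x_0,u)$.

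Next comes the key computation: I substitute the candidate $x=\bar x$ together with this $u$ into the coefficients of \rf{[ACBD]} and verify that both the diffusion and the drift collapse exactly to those of \rf{[ACBD]*}. For the diffusion, using $DM=(I_n,0_{n\times(m-n)})$ from \rf{DM=} gives $C\bar x+Du=C\bar x+(z-C\bar x)=z$. For the drift, using $BM=(K,L)$ and $\bar A=A-KC$ from \rf{barABD} gives $A\bar x+Bu=A\bar x+K(z-C\bar x)+Lv=\bar A\bar x+Kz+Lv$. Hence $\bar x$ satisfies
$$d\bar x(s)=[\bar A(s)\bar x(s)+K(s)z(s)+L(s)v(s)]ds+z(s)dW(s),\qquad \bar x(t_0)=x_0,$$
which is precisely \rf{[ACBD]} driven by the control $u$ with initial state $x_0$. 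By the uniqueness part of \cite[Chapter 1, Theorem 6.3]{Yong-Zhou 1999}, I conclude $x^{t_0,x_0,u}=\bar x$ on $[t_0,t_1]$ almost surely, and in particular $x^{t_0,x_0,u}(t_1)=\bar x(t_1)$.

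The conclusion is then immediate: the condition $\bar x(t_1)=x_1$ (that $(z,v)$ transfers $(t_0,x_0)$ to $(t_1,x_1)$ for \rf{[ACBD]*}) holds if and only if $x^{t_0,x_0,u}(t_1)=x_1$ (that $u$ transfers $(t_0,x_0)$ to $(t_1,x_1)$ for \rf{[ACBD]}), establishing both directions at once. I do not expect a genuine obstacle here: the content is the algebraic fact that $M$, $K$, $L$ were chosen exactly so that the feedback $z\mapsto z-C\bar x$ cancels the state-dependent part $C\bar x$ of the diffusion while converting $A$ into $\bar A=A-KC$ in the drift. The only points requiring a little care are the admissibility verification for $u$ and the correct bookkeeping of the block structure of $M$ when expanding $DM$ and $BM$.
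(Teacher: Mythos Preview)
Your proposal is correct and follows essentially the same route as the paper: you compute $A\bar x+Bu$ and $C\bar x+Du$, identify them with the drift and diffusion of \rf{[ACBD]*}, and invoke uniqueness to conclude $x^{t_0,x_0,u}=\bar x$. The admissibility check for $u$ is a small but welcome addition that the paper leaves implicit.
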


\begin{proof}
We first observe that
\begin{alignat*}{2}
\bar A(s)\bar x(s)+K(s)z(s)+L(s)v(s)
 &=  A(s)\bar x(s)+K(s)[z(s)-C(s)\bar x(s)]+L(s)v(s)\\
 &=  A(s)\bar x(s)+B(s)M(s)\begin{pmatrix} z(s)-C(s)\bar x(s)\\v(s)\end{pmatrix}\\
 &=  A(s)\bar x(s)+B(s)u(s),\\
C(s)\bar x(s)+D(s)u(s)
 &= C(s)\bar x(s)+D(s)M(s)\begin{pmatrix}z(s)-C(s)\bar x(s)\\v(s)\end{pmatrix}\\
 &= C(s)\bar x(s)+(I_n,0_{n\times (m-n)})\begin{pmatrix}z(s)-C(s)\bar x(s)\\v(s)\end{pmatrix}\\
 &= z(s).
\end{alignat*}
This means $\bar x$ also satisfies
$$d\bar x(s)=[A(s)\bar x(s)+B(s)u(s)]ds + [C(s)\bar x(s)+D(s)u(s)]dW(s).$$
Thus, by the uniqueness of a solution, with the initial state $x_0$ and the control $u$,
the solution $x$ of system \rf{[ACBD]} coincides with $\bar x$.
The result then follows immediately.
\end{proof}

From \autoref{prop:[ACBD]-[ACBD]*}, we see that the controllability of system \rf{[ACBD]}
is equivalent to that of system \rf{[ACBD]*}.
For the controllability of system \rf{[ACBD]*}, we have the following characterization,
which refines the result of Liu and Peng \cite[Theorem 2]{Liu-Peng 2010}.

\begin{proposition}\label{prop:controllability}
Let $0\les t_0<t_1\les T$, $x_0\in L^2_{\cF_{t_0}}(\Om;\dbR^n)$ and $x_1\in L^2_{\cF_{t_1}}(\Om;\dbR^n)$.
There exists a control $(z,v)\in L_\dbF^2(t_0,t_1;\dbR^n)\times L_\dbF^2(t_0,t_1;\dbR^{m-n})$ which
transfers the state of system \rf{[ACBD]*} from $x_0$ at $t=t_0$ to $x_1$ at $t=t_1$ if and only if
$x_0-\dbE[\F(t_0,t_1)x_1|\cF_{t_0}]$ belongs to the range space of
\bel{Grammian} \Psi(t_0,t_1) \deq \dbE\lt[\int_{t_0}^{t_1}\F(t_0,s)L(s)[\F(t_0,s)L(s)]^\top ds\rt] \ee
almost surely, that is, there exists an $\xi\in L^2_{\cF_{t_0}}(\Om;\dbR^n)$ such that
$$x_0-\dbE[\F(t_0,t_1)x_1|\cF_{t_0}]=\Psi(t_0,t_1)\xi,\q\as,$$
where $\F(t,s)=\F(t)^{-1}\F(s)$ with $\F=\{\F(s);0\les s\les T\}$ being the
solution to the following SDE for $\dbR^{n\times n}$-valued processes:
\bel{eqn:Phi}\left\{\begin{aligned}
d\F(s)  &= -\F(s)\bar A(s)ds-\F(s)K(s)dW(s), \q s\in[0,T],\\
 \F(0)  &= I_n.
\end{aligned}\right.\ee
\end{proposition}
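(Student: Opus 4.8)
The plan is to reduce the controllability question for system \rf{[ACBD]*} to a deterministic-type reachability condition by applying the variation-of-constants idea encoded in the fundamental solution $\F$, and then to read off the range condition from the resulting expression. First I would introduce $\F$ as in \rf{eqn:Phi} and compute, via It\^o's formula, the stochastic differential of $\F(s)\bar x(s)$ along a trajectory of \rf{[ACBD]*}. Since $\F$ satisfies $d\F=-\F\bar A\,ds-\F K\,dW$ and the diffusion coefficient of $\bar x$ in \rf{[ACBD]*} is exactly $z$, the cross-variation term $-\F K z\,ds$ should cancel the drift contribution $\F K z\,ds$, leaving an expression of the form
$$
d\big(\F(s)\bar x(s)\big) = \F(s)L(s)v(s)\,ds + \F(s)\big[z(s)-K(s)\bar x(s)\big]\,dW(s).
$$
This is the key computation that linearizes the problem: the $z$-component of the control enters only through the martingale part.

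Integrating from $t_0$ to $t_1$ and using $\F(t_0)=\F(t_0)$ together with the definition $\F(t_0,s)=\F(t_0)^{-1}\F(s)$, I would rearrange to obtain an identity expressing $x_0$ in terms of $x_1$, an integral $\int_{t_0}^{t_1}\F(t_0,s)L(s)v(s)\,ds$, and a stochastic integral against $dW$. Taking $\dbE[\,\cdot\mid\cF_{t_0}\,]$ annihilates the martingale term (its conditional expectation is zero, after verifying the relevant $L^2$ integrability so that the stochastic integral is a genuine martingale, exactly as in the proof of \autoref{lemma:Y-formula}). This yields the necessary condition
$$
x_0-\dbE[\F(t_0,t_1)x_1\mid\cF_{t_0}] = \dbE\Big[\int_{t_0}^{t_1}\F(t_0,s)L(s)v(s)\,ds\,\Big|\,\cF_{t_0}\Big],
$$
so the reachable set is the (closure of the) range of the map $v\mapsto\dbE[\int\F(t_0,s)L(s)v(s)\,ds\mid\cF_{t_0}]$. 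The remaining task is to show this range coincides almost surely with $\sR(\Psi(t_0,t_1))$, where $\Psi$ is the Gramian \rf{Grammian}.

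For that identification I would argue as in classical controllability: the Gramian $\Psi(t_0,t_1)$ has range equal to the range of the operator $v\mapsto\int\F(t_0,s)L(s)v(s)\,ds$. The inclusion $\sR(\Psi)\subseteq\{\text{reachable}\}$ is immediate by choosing the ``least-norm'' steering control $v(s)=[\F(t_0,s)L(s)]^\top\xi$ for $\xi$ with $\Psi(t_0,t_1)\xi$ equal to the target offset; substituting this $v$ reproduces $\Psi(t_0,t_1)\xi$ exactly. For the reverse inclusion, I would use the standard orthogonality argument: if $x_0-\dbE[\F(t_0,t_1)x_1\mid\cF_{t_0}]$ lies outside $\sR(\Psi)=\sR(\Psi^\top)$, there is a vector $\zeta$ with $\Psi\zeta=0$ but $\langle\zeta,\,x_0-\dbE[\cdots]\rangle\neq0$; the condition $\Psi(t_0,t_1)\zeta=0$ forces $[\F(t_0,s)L(s)]^\top\zeta=0$ for a.e.\ $s$, which makes the offset orthogonal to every reachable increment, a contradiction. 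Once the steering control $v$ is fixed, the martingale (i.e.\ $z$) component can be chosen freely to meet the terminal constraint pathwise, and one must check that the resulting $(z,v)$ lies in the admissible space $L_\dbF^2$. The main obstacle I anticipate is technical rather than conceptual: carefully justifying the conditional-expectation step (integrability of $\F(t_0,s)[z(s)-K(s)\bar x(s)]$ so the $dW$-integral is a true martingale) and handling the random, $\cF_{t_0}$-measurable target offset so that the finite-dimensional range argument for $\Psi$ goes through $\as$ rather than merely in expectation. The algebraic range identity $\sR(ABA^\top)=\sR(AB)$ from \autoref{lmm:range=range} is the clean tool that makes the Gramian's range match the reachable set.
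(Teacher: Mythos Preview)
Your strategy coincides with the paper's: apply It\^o's formula to $\F\bar x$ to obtain
$d(\F\bar x)=\F Lv\,ds+\F(z-K\bar x)\,dW$, take conditional expectations to isolate the offset, use the least-norm control $v(s)=\pm[\F(t_0,s)L(s)]^\top\xi$ for sufficiency, and the orthogonality/contradiction argument with a $\beta\in L^2_{\cF_{t_0}}$ in $\ker\Psi$ for necessity. Two minor remarks: (i) the sign in your displayed identity for $x_0-\dbE[\F(t_0,t_1)x_1\mid\cF_{t_0}]$ is off by $-1$, which is why the paper's control carries a minus sign; (ii) the step you describe as ``the $z$ component can be chosen freely to meet the terminal constraint pathwise'' is exactly where the paper is more explicit --- it fixes $v$ and then obtains $z$ as the second component of the adapted solution to the BSDE with terminal datum $x_1$ (in fact it splits into two BSDEs with terminal values $0$ and $x_1$ and uses \autoref{lemma:Y-formula} to check the initial value). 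Finally, \autoref{lmm:range=range} is not needed here; the paper reserves it for the proof of \autoref{thm:main}.
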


\begin{proof}
{\it Sufficiency}. Suppose that there exists an $\xi\in L^2_{\cF_{t_0}}(\Om;\dbR^n)$ such that 
$$x_0-\dbE[\F(t_0,t_1)x_1|\cF_{t_0}]=\Psi(t_0,t_1)\xi,\q\as$$
Define
$$v(s)=-[\F(t_0,s)L(s)]^\top\xi,\q s\in[t_0,t_1],$$
and let $(y_1,z_1)$ be the adapted solution to the following BSDE:
$$\left\{\begin{aligned}
 dy_1(s)  &= \big[\bar A(s)y_1(s)+K(s)z_1(s)+L(s)v(s)\big]ds+z_1(s)dW(s), \q s\in[t_0,t_1],\\
y_1(t_1)  &= 0.
\end{aligned}\right.$$
According to \autoref{lemma:Y-formula},
\begin{align*}
y_1(t_0)
&=  -\dbE\lt[\int_{t_0}^{t_1}\F(t_0,s)L(s)v(s)ds\bigg|\cF_{t_0}\rt]\\
&=  \dbE\lt[\int_{t_0}^{t_1}\F(t_0,s)L(s)[\F(t_0,s)L(s)]^\top\xi ds\bigg|\cF_{t_0}\rt].
\end{align*}
Noting that $\xi$ is $\cF_{t_0}$-measurable and that $\F(t_0,s)$ is independent of $\cF_{t_0}$ for $s\ges t$,
we further obtain
\begin{align*}
y_1(t_0) &= \dbE\lt[\int_{t_0}^{t_1}\F(t_0,s)L(s)[\F(t_0,s)L(s)]^\top ds\rt]\xi = \Psi(t_0,t_1)\xi \\
         &= x_0-\dbE[\F(t_0,t_1)x_1|\cF_{t_0}].
\end{align*}
%
%
Now let $(y_2,z_2)$ be the adapted solution to the BSDE
\bel{eqn:Y2&z2}\left\{\begin{aligned}
 dy_2(s) &= \big[\bar A(s)y_2(s)+K(s)z_2(s)\big]ds + z_2(s)dW(s), \q s\in[t_0,t_1],\\
y_2(t_1) &= x_1,
\end{aligned}\right.\ee
and define
$$\bar x(s)=y_1(s)+y_2(s),\q z(s)=z_1(s)+z_2(s),\q s\in[t_0,t_1].$$
By \autoref{lemma:Y-formula}, $y_2(t_0)=\dbE[\F(t_0,t_1)x_1|\cF_{t_0}]$,
and thus, by linearity, $(\bar x,z,v)$ satisfies
$$\left\{\begin{aligned}
 d\bar x(s) &= \big[\bar A(s)\bar x(s)+K(s)z(s)+L(s)v(s)\big]ds + z(s)dW(s), \q s\in[t_0,t_1],\\
\bar x(t_0) &= x_0, \q \bar x(t_1)=x_1.
\end{aligned}\right.$$
This shows $(t_1,x_1)$ can be reached from $(t_0,x_0)$ by $(z,v)$.

\ms

{\it Necessity}. We prove the necessity by contradiction.
Suppose that $(t_1,x_1)$ can be reached from $(t_0,x_0)$ by some control $(z,v)$
but there exists some $\Om^\prime\subseteq\Om$ with $\dbP(\Om^\prime)>0$ such that
$x_0(\om)-\dbE[\F(t_0,t_1)x_1|\cF_{t_0}](\om)$ does not lie in the range space
of $\Psi(t_0,t_1)$ for every $\om\in\Om^\prime$.
Then we can find an $\b\in L^2_{\cF_{t_0}}(\Om;\dbR^n)$ such that
$$\Psi(t_0,t_1)\b=0,~\as,\q\hb{and}\q\dbE\big(\b^\top\b_0\big)>0,$$
where $\b_0=x_0-\dbE[\F(t_0,t_1)x_1|\cF_{t_0}]$.
Let $\bar x$ be the corresponding state process.
By applying the integration by parts formula to $\F\bar x$, we have
$$\F(t_1)x_1-\F(t_0)x_0 =\int_{t_0}^{t_1}\F(s)L(s)v(s)ds + \int_{t_0}^{t_1}\F(s)\big[z(s)-K(s)\bar x(s)\big]dW(s).$$
Taking conditional expectations with respect to $\cF_{t_0}$ on both sides of the above, we get
$$-\F(t_0)\b_0 = \dbE[\F(t_1)x_1|\cF_{t_0}]-\F(t_0)x_0
                   = \dbE\lt[\int_{t_0}^{t_1}\F(s)L(s)v(s)ds\bigg|\cF_{t_0}\rt], $$
from which it follows that
\begin{align}\label{contradiction>0}
0  &< \dbE\big(\b^\top\b_0\big)
    = -\dbE\lt(\b^\top\F(t_0)^{-1}\dbE\lt[\int_{t_0}^{t_1}\F(s)L(s)v(s)ds\bigg|\cF_{t_0}\rt]\rt) \nn\\
&=  -\dbE\lt(\b^\top\F(t_0)^{-1}\int_{t_0}^{t_1}\F(s)L(s)v(s)ds\rt) \nn\\
&= -\dbE\lt(\int_{t_0}^{t_1}\b^\top\F(t_0,s)L(s)v(s)ds\rt).
\end{align}
But, using the fact that $\Psi(t_0,t_1)\b=0$ $\as$ and
noting that $\b$ is independent of $\F(t_0,s)$ for $s\ges t_0$, we have
\begin{align*}
0 &= \dbE\big(\b^\top\Psi(t_0,t_1)\b\big)
   = \dbE\int_{t_0}^{t_1}\b^\top\F(t_0,s)L(s)[\F(t_0,s)L(s)]^\top\b ds\\
  &= \dbE\int_{t_0}^{t_1}\big|\b^\top\F(t_0,s)L(s)\big|^2 ds,
\end{align*}
which implies the vanishing of $\b^\top\F(t_0,s)L(s)$ and the contradiction of \rf{contradiction>0}.
\end{proof}

\begin{remark}\rm
The matrix $\Psi(t_0,t_1)$ defined by \rf{Grammian} is called the {\it controllability Gramian}
of system \rf{[ACBD]*} over $[t_0,t_1]$.
Note that $\Psi(t_0,t_1)$ is symmetric positive semidefinite.
\end{remark}

Propositions \ref{prop:[ACBD]-[ACBD]*} and \ref{prop:controllability}
have some easy consequences which we summarize as follows.

\begin{corollary}\label{coro:controllability}
Let $0\les t_0<t_1\les T$, and let $\F$ be the solution to \rf{eqn:Phi}.
\begin{enumerate}[\indent\rm(i)]
\item System \rf{[ACBD]} is exactly controllable on $[t_0,t_1]$ if and only if system \rf{[ACBD]*} is so.
\item System \rf{[ACBD]*} is exactly controllable on $[t_0,t_1]$ if and only if the controllability Gramian
      $\Psi(t_0,t_1)$ is positive definite.
\item Let $F\in\dbR^{k\times n}$ and $b\in L^2_{\cF_{t_0}}(\Om;\dbR^k)$.
      For system \rf{[ACBD]*}, there exists a point on the stochastic linear manifold
      $$\cH(F,b) = \{\xi\in L^2_{\cF_{t_0}}(\Om;\dbR^n): F\xi=b\}$$
      that can be transferred to $x_1\in L^2_{\cF_{t_1}}(\Om;\dbR^n)$ at time $t=t_1$
      if and only if there exist an $\xi\in L^2_{\cF_{t_0}}(\Om;\dbR^n)$ such that
      $$F\Psi(t_0,t_1)\xi = b-F\dbE[\F(t_0,t_1)x_1|\cF_{t_0}].$$
\end{enumerate}
\end{corollary}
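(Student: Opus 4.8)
The three assertions are routine consequences of the two preceding propositions; the plan is to unwind each definition and reduce it to an algebraic statement about the deterministic matrix $\Psi(t_0,t_1)$. For (i), I would invoke \autoref{prop:[ACBD]-[ACBD]*} in both directions. For a fixed pair $x_0,x_1$, that proposition exhibits the map $u\deq M\,(z-C\bar x,v)^\top$ under which $(z,v)$ transfers $(t_0,x_0)$ to $(t_1,x_1)$ for \rf{[ACBD]*} exactly when $u$ does so for \rf{[ACBD]}. The inverse correspondence is obtained by taking the state $x$ of \rf{[ACBD]} driven by a given $u$, setting $(w,v)^\top\deq M^{-1}u$ and $z\deq w+Cx$, and checking, by the same computation as in the proposition, that this $(z,v)$ reproduces $x$ as the solution of \rf{[ACBD]*}; hence the reachable targets from $(t_0,x_0)$ coincide for the two systems. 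Since exact controllability (\autoref{def:controllability}) is the requirement that \emph{every} $x_1$ be reachable from \emph{every} $x_0$, it holds for one system iff it holds for the other.

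For (ii), I would read exact controllability of \rf{[ACBD]*} directly off \autoref{prop:controllability}: the system is exactly controllable iff $x_0-\dbE[\F(t_0,t_1)x_1|\cF_{t_0}]$ lies in the range of $\Psi(t_0,t_1)$ almost surely for every $x_0,x_1$. The crucial reduction is that, fixing $x_1=0$ and letting $x_0$ range over $L^2_{\cF_{t_0}}(\Om;\dbR^n)$, the vector $x_0-\dbE[\F(t_0,t_1)x_1|\cF_{t_0}]$ realizes every element of $L^2_{\cF_{t_0}}(\Om;\dbR^n)$. Thus the range condition holds for all targets iff the deterministic matrix $\Psi(t_0,t_1)$ has full range in $\dbR^n$, which for a symmetric positive semidefinite matrix is equivalent to positive definiteness. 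For the nontrivial direction I would, when $\Psi(t_0,t_1)$ is singular, pick a deterministic $\eta_0\notin\sR(\Psi(t_0,t_1))$ and take $x_0=\eta_0$, $x_1=0$ to defeat controllability; conversely, positive definiteness lets me solve $\Psi(t_0,t_1)\xi=x_0-\dbE[\F(t_0,t_1)x_1|\cF_{t_0}]$ with $\xi\deq\Psi(t_0,t_1)^{-1}\big(x_0-\dbE[\F(t_0,t_1)x_1|\cF_{t_0}]\big)\in L^2_{\cF_{t_0}}(\Om;\dbR^n)$.

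For (iii), I would apply \autoref{prop:controllability} once more. A point $\xi_0$ of $\cH(F,b)$ can be transferred to $x_1$ iff $\xi_0-\dbE[\F(t_0,t_1)x_1|\cF_{t_0}]=\Psi(t_0,t_1)\xi$ for some $\xi$, i.e. iff $\xi_0=\dbE[\F(t_0,t_1)x_1|\cF_{t_0}]+\Psi(t_0,t_1)\xi$. Applying $F$ and imposing $F\xi_0=b$ turns this into $F\Psi(t_0,t_1)\xi=b-F\dbE[\F(t_0,t_1)x_1|\cF_{t_0}]$; conversely, any solution $\xi$ of the latter produces, via the same formula, an admissible $\xi_0\in\cH(F,b)$ reaching $x_1$. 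The only real point of care throughout is bookkeeping: $\Psi(t_0,t_1)$ and $F$ are deterministic while $\xi_0,\xi$ are random, so the substitutions and the application of $F$ are performed $\om$-wise and the range memberships must be read in the almost-sure sense. No genuine obstacle arises beyond this and the quantifier reduction in (ii); the content is entirely carried by the two propositions.
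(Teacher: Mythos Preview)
Your proposal is correct and follows essentially the same approach as the paper: part (i) is deduced from \autoref{prop:[ACBD]-[ACBD]*}, part (ii) from \autoref{prop:controllability} via the reduction $x_1=0$ and deterministic $x_0$, and part (iii) by rewriting the range condition of \autoref{prop:controllability} as $\xi_0=\dbE[\F(t_0,t_1)x_1|\cF_{t_0}]+\Psi(t_0,t_1)\xi$ and applying $F$. Your treatment is slightly more explicit---you spell out the inverse transformation $(w,v)^\top=M^{-1}u$, $z=w+Cx$ in (i) and the formula $\xi=\Psi(t_0,t_1)^{-1}(\cdots)$ in (ii)---but these are exactly the details implicit in the paper's terse argument.
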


\begin{proof}
(i) It is a direct consequence of \autoref{prop:[ACBD]-[ACBD]*}.

\ms

(ii) If $\Psi(t_0,t_1)>0$, then obviously, $x_0-\dbE[\F(t_0,t_1)x_1|\cF_{t_0}]$ belongs to $\sR(\Psi(t_0,t_1))$,
the range of $\Psi(t_0,t_1)$, for all $x_0\in L^2_{\cF_{t_0}}(\Om;\dbR^n)$
and all $x_1\in L^2_{\cF_{t_1}}(\Om;\dbR^n)$.
Thus, by \autoref{prop:controllability}, system \rf{[ACBD]*} is exactly controllable on $[t_0,t_1]$.
Conversely, if system \rf{[ACBD]*} is exactly controllable on $[t_0,t_1]$,
then for $x_1=0$ and any $x_0\in\dbR^n$,
$$x_0=x_0-\dbE[\F(t_0,t_1)x_1|\cF_{t_0}]\in\sR(\Psi(t_0,t_1)),$$
which implies that $\Psi(t_0,t_1)$ has full rank and hence is positive definite.

\ms

(iii) By \autoref{prop:controllability} we know that a state $x_1\in L^2_{\cF_{t_1}}(\Om;\dbR^n)$
can be reached at $t_1$ from some $x_0\in\cH(F,b)$ if and only if there exists an
$\xi\in L^2_{\cF_{t_0}}(\Om;\dbR^n)$ such that
$$\Psi(t_0,t_1)\xi = x_0-\dbE[\F(t_0,t_1)x_1|\cF_{t_0}].$$
Thus, the state $x_1$ can be reached from $\cH(F,b)$ if and only if the $\xi$ is such that
$$F\{\Psi(t_0,t_1)\xi +\dbE[\F(t_0,t_1)x_1|\cF_{t_0}]\} =b.$$
The desired result then follows readily.
\end{proof}

The construction in the proof of \autoref{prop:controllability} actually provides an explicit procedure
for finding a control that accomplishes desired transfers. Let us recap and conclude this section.

\begin{proposition}\label{prop:candidate}
Let $0\les t_0<t_1\les T$ and $x_1\in L^2_{\cF_{t_1}}(\Om;\dbR^n)$.
Let $F\in\dbR^{k\times n}$ and $b\in L^2_{\cF_{t_0}}(\Om;\dbR^k)$.
If $\xi\in L^2_{\cF_{t_0}}(\Om;\dbR^n)$ is such that
$$F\Psi(t_0,t_1)\xi = b-F\dbE[\F(t_0,t_1)x_1|\cF_{t_0}],$$
then with
$$v(s) \deq -L(s)^\top\F(t_0,s)^\top\xi,\q s\in[t_0,t_1],$$
and $z=\{z(s);t_0\les s\les t_1\}$ being the second component of the adapted solution to the BSDE
$$\left\{\begin{aligned}
dy(s)  &= \big[\bar A(s)y(s)+K(s)z(s)+L(s)v(s)\big]ds + z(s)dW(s),\q s\in[t_0,t_1],\\
y(t_1) &= x_1,
\end{aligned}\right.$$
$(z,v)$ transfers the state of the system \rf{[ACBD]*} from
$$x_0 = \Psi(t_0,t_1)\xi + \dbE[\F(t_0,t_1)x_1|\cF_{t_0}] \in\cH(F,b)$$
at $t=t_0$ to $x_1$ at $t=t_1$.
\end{proposition}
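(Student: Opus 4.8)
The plan is to verify two assertions hidden in the statement: that the displayed $x_0$ actually lies on $\cH(F,b)$, and that the prescribed pair $(z,v)$ drives the forward system \rf{[ACBD]*} from $x_0$ at $t_0$ to $x_1$ at $t_1$. The whole argument is a repackaging of the sufficiency half of \autoref{prop:controllability}, so I would keep it brief. First I would check the membership $x_0\in\cH(F,b)$: left-multiplying $x_0=\Psi(t_0,t_1)\xi+\dbE[\F(t_0,t_1)x_1|\cF_{t_0}]$ by $F$ and invoking the defining identity $F\Psi(t_0,t_1)\xi=b-F\dbE[\F(t_0,t_1)x_1|\cF_{t_0}]$, the two conditional-expectation terms cancel and leave $Fx_0=b$.

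Next I would compute the initial value $y(t_0)$ of the first component of the adapted solution to the given BSDE. Since the matrix process $\F$ solving \rf{eqn:Phi} is exactly the process $\G$ of \autoref{lemma:Y-formula} with the pair $(A,C)$ replaced by $(\bar A,K)$, a direct application of that lemma, with inhomogeneous term $f=Lv$ and terminal datum $x_1$, yields
$$y(t_0)=\dbE\Big[\F(t_0,t_1)x_1-\int_{t_0}^{t_1}\F(t_0,s)L(s)v(s)\,ds\,\Big|\,\cF_{t_0}\Big].$$
Substituting $v(s)=-L(s)^\top\F(t_0,s)^\top\xi$ into the integral and using that $\xi$ is $\cF_{t_0}$-measurable while $\F(t_0,s)$ is independent of $\cF_{t_0}$ for $s\ges t_0$, the integral term collapses to $\Psi(t_0,t_1)\xi$; hence $y(t_0)=\Psi(t_0,t_1)\xi+\dbE[\F(t_0,t_1)x_1|\cF_{t_0}]=x_0$.

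The final and most conceptual step is to identify $y$ with the forward state process. The key observation is that the pair $(y,z)$, read forward in time, satisfies precisely the dynamics of \rf{[ACBD]*} under the control $(z,v)$: its drift is $\bar A y+Kz+Lv$ and its diffusion coefficient is $z$. Thus $y$ is a solution of \rf{[ACBD]*} with initial datum $y(t_0)=x_0$ and control $(z,v)$, and by uniqueness of solutions to \rf{[ACBD]*} the forward state $\bar x$ launched from $x_0$ under $(z,v)$ must coincide with $y$. Consequently $\bar x(t_1)=y(t_1)=x_1$, which is exactly the claimed transfer. I expect this identification to be the only delicate point, as it is where a backward object (a BSDE prescribing the value at $t_1$) is reinterpreted as a forward controlled evolution with prescribed value at $t_0$; the remaining steps are the linearity-and-independence bookkeeping already carried out in \autoref{prop:controllability}.
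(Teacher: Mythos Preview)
Your proposal is correct and follows essentially the same approach as the paper, which presents this proposition as a recap of the sufficiency argument in \autoref{prop:controllability}. The only cosmetic difference is that the paper's sufficiency proof splits the BSDE into two pieces $(y_1,z_1)$ (terminal $0$, inhomogeneity $Lv$) and $(y_2,z_2)$ (terminal $x_1$, no inhomogeneity) and superposes them, whereas you apply \autoref{lemma:Y-formula} directly to the single BSDE with terminal datum $x_1$; this is the same computation in one step instead of two.
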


\section{Lagrange multipliers and unconstrained backward LQ problems}\label{Sec:Lagrange}

We now return to\autoref{Prob}. Recall that the nondegeneracy condition \rf{rank(D)=n} is assumed
so that the target $\eta$ can be reached from a given stochastic linear manifold $\cH(F,b)$.
Let $M$ be as in \rf{DM=} and $\bar A$, $K$, $L$ be as in \rf{barABD}.
We have seen from \autoref{prop:[ACBD]-[ACBD]*} that systems \rf{[ACBD]} and \rf{[ACBD]*} share the
same controllability.
So by appropriate transformations, we may assume without loss of generality that the state equation
\rf{state} takes the form
\bel{state*} dx(s) = [A(s)x(s)+K(s)z(s)+L(s)v(s)]ds + z(s)dW(s), \q s\in[t,T],\ee
and that the cost functional \rf{cost} takes the following form:
\begin{align}\label{cost*}
J(t,z,v) &= \dbE\bigg\{\lan Gx(t),x(t)\ran +\int_t^T\[\lan Q(s)x(s),x(s)\ran  \nn\\
         &\hp{=\dbE\bigg\{\q} + \lan R(s)z(s),z(s)\ran + \lan N(s)v(s),v(s)\ran\]ds\bigg\}.
\end{align}
That is, the coefficients $B$ and $D$ of \rf{state} are given by
$$ B(s) = (K(s),L(s)), \q D(s) = (I_n,0_{n\times (m-n)}); \q s\in[0,T], $$
and the control $u$ is $\begin{pmatrix}z \\ v\end{pmatrix}$.
In this case, with the given terminal state $\eta$, we may think of $v$ alone as the control and
regard $(x,z)$ as the adapted solution to the BSDE
\bel{B-state}\left\{\begin{aligned}
dx(s) &= [A(s)x(s)+K(s)z(s)+L(s)v(s)]ds + z(s)dW(s), \q s\in[t,T],\\
 x(T) &= \eta.
\end{aligned}\right.\ee
Further, since for given $\eta$, $z$ is uniquely decided by $v$,
we can simply write the cost functional \rf{cost*} as $J(t,v)$.
Therefore, solving\autoref{Prob} is equivalent to finding an optimal control $v^*$ for the following
constrained backward LQ problem.

\begin{taggedthm}{Problem (CBLQ)}\label{CBLQ}
For a given terminal state $\eta\in L^2_{\cF_T}(\Om;\dbR^n)$, find a control
$v^*\in L_\dbF^2(t,T;\dbR^{m-n})$ such that the corresponding adapted solution
$(x^*,z^*)$ of \rf{B-state} satisfies $x^*(t)\in\cH(F,b)$, and
\bel{mincJ} J(t,v^*)\les J(t,v), \q\forall v\in L_\dbF^2(t,T;\dbR^{m-n}).\ee
\end{taggedthm}

For this reduced problem, we impose the following assumptions that are similar to
the conditions \ref{(A1)} and \ref{(A2)}.
\begin{itemize}
\item[{\setword{(H1)}{(H1)}}]
$A,K:[0,T]\to\dbR^{n\times n}$ and $L:[0,T]\to\dbR^{n\times(m-n)}$ are bounded measurable functions.
\item[{\setword{(H2)}{(H2)}}]
$G$ is a symmetric $n\times n$ matrix;
$Q,R:[0,T]\to\dbR^{n\times n}$ and  $N:[0,T]\to\dbR^{(m-n)\times(m-n)}$
are bounded and symmetric. Moreover, for some $\d>0$,
$$G\ges 0, \q Q(s)\ges0, \q R(s)\ges0, \q N(s)\ges\d I_{m-n}, \q\ae~s\in[0,T].$$
\end{itemize}

To find an optimal control for\autoref{CBLQ}, let $\l\in L^2_{\cF_t}(\Om;\dbR^k)$ be undetermined and define
\begin{align}\label{cost-l}
J_\l(t,v) &\deq J(t,v) + 2\dbE\lan F^\top\l,x(t)\ran \nn\\
          &=   \dbE\bigg\{\lan Gx(t),x(t)\ran + 2\dbE\lan F^\top\l,x(t)\ran +\int_t^T\[\lan Q(s)x(s),x(s)\ran \nn\\
          &\hp{=\qq} +\lan R(s)z(s),z(s)\ran + \lan N(s)v(s),v(s)\ran\]ds\bigg\}.
\end{align}
Consider the following parameterized unconstrained backward LQ problem.

\begin{taggedthm}{Problem (BLQ)$_\l$}\label{BLQ}
For a given terminal state $\eta\in L^2_{\cF_T}(\Om;\dbR^n)$, find a control
$v^*\in L_\dbF^2(t,T;\dbR^{m-n})$ such that
\bel{minJl} J_\l(t,v^*)\les J_\l(t,v), \q\forall v\in L_\dbF^2(t,T;\dbR^{m-n}),\ee
subject to the backward state equation \rf{B-state}.
\end{taggedthm}

If for some parameter $\l\in L^2_{\cF_t}(\Om;\dbR^k)$, the optimal control $v^*_\l$ of\autoref{BLQ}
is such that the initial state of system \rf{B-state} falls on the stochastic linear manifold $\cH(F,b)$,
then intuitively we can convince ourselves that $v^*_\l$ is also optimal for\autoref{CBLQ}.
In fact, we have the following result.

\begin{proposition}\label{prop:relation}
Let {\rm\ref{(H1)}--\ref{(H2)}} hold, and let $\eta\in L^2_{\cF_T}(\Om;\dbR^n)$ be given.
If $v^*_\l$ is an optimal control of\autoref{BLQ} such that the adapted solution $(x^*_\l,z^*_\l)$ of
$$\left\{\begin{aligned}
 dx^*_\l(s)  &= [A(s)x^*_\l(s)+K(s)z^*_\l(s)+L(s)v^*_\l(s)]ds + z^*_\l(s)dW(s), \q s\in[t,T],\\
  x^*_\l(T)  &= \eta,
\end{aligned}\right.$$
satisfies $x^*_\l(t)\in\cH(F,b)$, then $v^*_\l$ is also optimal for\autoref{CBLQ}.
\end{proposition}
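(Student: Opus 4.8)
The plan is to run the classical Lagrangian sufficiency argument. The guiding observation is that the augmenting term in $J_\l$ is \emph{constant} across the feasible set of\autoref{CBLQ}, so an unconstrained minimizer of $J_\l$ that happens to land in that feasible set must automatically minimize $J$ over it.

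First I would record how $J_\l$ and $J$ are related on feasible controls. For any $v\in L_\dbF^2(t,T;\dbR^{m-n})$ whose corresponding state satisfies the constraint $x(t)\in\cH(F,b)$—equivalently $Fx(t)=b$—the definition \rf{cost-l} gives
$$J_\l(t,v) = J(t,v) + 2\dbE\lan F^\top\l,x(t)\ran = J(t,v) + 2\dbE\lan\l,Fx(t)\ran = J(t,v) + 2\dbE\lan\l,b\ran.$$
The essential point is that the last term $2\dbE\lan\l,b\ran$ is a constant independent of $v$; the linear form $F^\top\l$ in $J_\l$ was chosen precisely so that it collapses to this constant once the affine constraint $Fx(t)=b$ is enforced.

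Next I would use optimality of $v^*_\l$ for\autoref{BLQ}. Since $v^*_\l$ minimizes $J_\l$ over the \emph{entire} space $L_\dbF^2(t,T;\dbR^{m-n})$, we have $J_\l(t,v^*_\l)\les J_\l(t,v)$ for every $v$, and in particular for every feasible $v$. By hypothesis $x^*_\l(t)\in\cH(F,b)$, so $v^*_\l$ is itself feasible for\autoref{CBLQ}, and the displayed identity therefore applies simultaneously to $v^*_\l$ and to any competing feasible $v$. Subtracting the common constant $2\dbE\lan\l,b\ran$ from both sides of $J_\l(t,v^*_\l)\les J_\l(t,v)$ yields $J(t,v^*_\l)\les J(t,v)$ for every feasible $v$. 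Combined with the feasibility of $v^*_\l$, this is exactly the assertion that $v^*_\l$ is optimal for\autoref{CBLQ}, establishing \rf{mincJ}.

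I do not expect a genuine obstacle in this argument: it is the standard observation that a Lagrangian relaxation whose multiplier-coupled constraint is satisfied at the unconstrained optimum returns a solution of the constrained problem. The only step requiring care is the very first one, namely verifying that the quadratic-plus-linear structure of $J_\l$ reduces to $J$ plus a $v$-independent constant on the set $\{Fx(t)=b\}$; everything after that is a one-line comparison of inequalities.
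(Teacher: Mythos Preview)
Your argument is correct and is essentially the same as the paper's: both use that $J_\l(t,v)=J(t,v)+2\dbE\lan\l,Fx(t)\ran$ and that $Fx(t)=b$ on feasible controls to cancel the linear term. The only cosmetic difference is that you subtract the common constant $2\dbE\lan\l,b\ran$ directly, while the paper writes the cancellation as $\dbE\lan\l,F[x(t)-x^*_\l(t)]\ran=0$.
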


\begin{proof}
Since $v^*_\l$ is optimal for\autoref{BLQ}, \rf{minJl} holds.
In particular, for any $v\in L_\dbF^2(t,T;\dbR^{m-n})$ such that the initial state
of system \rf{B-state} falls on $\cH(F,b)$, we have
\begin{align*}
& J(t,v^*_\l)+2\dbE\lan F^\top\l,x^*_\l(t)\ran = J_\l(t,v^*_\l)\les J_\l(t,v) = J(t,v)+2\dbE\lan F^\top\l,x(t)\ran, \\
& Fx(t)=Fx^*_\l(t)=b,
\end{align*}
from which it follows that
\begin{align*}
J(t,v^*_\l) &\les J(t,v)+2\dbE\lan F^\top\l,x(t)-x^*_\l(t)\ran \\
            &= J(t,v)+2\dbE\lan \l,F[x(t)-x^*_\l(t)]\ran = J(t,v).
\end{align*}
This completes the proof.
\end{proof}

According to \autoref{prop:relation}, the procedure for finding the optimal control of
our original\autoref{CBLQ} can be divided into two steps.
\begin{enumerate}[\indent Step 1.]
\item Construct the optimal control $v^*_\l$ for the parameterized unconstrained backward LQ problem.
\item Select the parameter $\l$ such that the corresponding optimal state process $x^*_\l$ of\autoref{BLQ}
      satisfies $x^*_\l(t)\in\cH(F,b)$.
\end{enumerate}
For Step 1, we first present the following result,
which characterizes the optimal control of\autoref{BLQ} in terms of FBSDEs.

\begin{theorem}\label{thm:optmality}
Let {\rm\ref{(H1)}--\ref{(H2)}} hold.
Let $\l\in L^2_{\cF_t}(\Om;\dbR^k)$ and $\eta\in L^2_{\cF_T}(\Om;\dbR^n)$ be given.
Then a control $v^*\in L_\dbF^2(t,T;\dbR^{m-n})$ is optimal for\autoref{BLQ}
if and only if the adapted solution $(x^*,z^*,y^*)$ to the coupled FBSDE
\bel{xzy-star}\left\{\begin{aligned}
  dx^*(s) &= (Ax^*+Kz^*+Lv^*)ds + z^*dW(s), \\
  dy^*(s) &= (-A^\top y^* + Qx^*)ds + (-K^\top y^* +Rz^*)dW(s), \\
   x^*(T) &= \eta, \q y^*(t)= Gx^*(t)+F^\top\l,
\end{aligned}\right.\ee
satisfies the following stationarity condition:
\bel{stationarity} Nv^*-L^\top y^*=0, \q\ae~\hbox{on}~[t,T],~\as \ee
\end{theorem}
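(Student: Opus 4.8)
The plan is to combine the convexity of $J_\l(t,\cdot)$ with a duality identity generated by the adjoint process $y^*$. Fix the candidate triple $(x^*,z^*,y^*)$ solving \rf{xzy-star}; note that for a given $v^*$ this system is in fact decoupled, since one first solves the backward equation for $(x^*,z^*)$ with terminal value $\eta$ and only then the forward equation for $y^*$ with initial value $Gx^*(t)+F^\top\l$. For an arbitrary $v'\in L_\dbF^2(t,T;\dbR^{m-n})$ set $v=v^*+v'$. Because $\eta$ is fixed and \rf{B-state} is linear, the associated solution is $(x^*+x',z^*+z')$, where $(x',z')$ is the adapted solution of the homogeneous backward equation $dx'=(Ax'+Kz'+Lv')ds+z'dW$ with $x'(T)=0$. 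Expanding \rf{cost-l} and using the symmetry of $G,Q,R,N$ gives the exact decomposition
$$J_\l(t,v)=J_\l(t,v^*)+2\,\dbE\cI(v')+\dbE\cQ(v'),$$
where $\cQ(v')=\lan Gx'(t),x'(t)\ran+\int_t^T\big[\lan Qx',x'\ran+\lan Rz',z'\ran+\lan Nv',v'\ran\big]ds\ges0$ by \ref{(H2)}, and $\cI(v')$ collects the first-order cross terms $\lan Gx^*(t)+F^\top\l,x'(t)\ran+\int_t^T\big[\lan Qx^*,x'\ran+\lan Rz^*,z'\ran+\lan Nv^*,v'\ran\big]ds$.

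The crux is to re-express $\dbE\cI(v')$ through the adjoint equation. Applying It\^o's formula to $\lan y^*,x'\ran$ on $[t,T]$, the drift simplifies drastically: the $A$-contributions cancel because $\lan y^*,Ax'\ran=\lan A^\top y^*,x'\ran$, the $K$-contribution from the drift cancels against the $-K^\top y^*$ term via the It\^o covariation of the two $dW$-parts, and only $\lan Qx^*,x'\ran+\lan Rz^*,z'\ran+\lan L^\top y^*,v'\ran$ survives. Taking expectations removes the martingale term, and inserting $x'(T)=0$ together with $y^*(t)=Gx^*(t)+F^\top\l$ yields
$$\dbE\cI(v')=\dbE\int_t^T\lan Nv^*-L^\top y^*,v'\ran\,ds.$$

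Both implications then follow at once. If $v^*$ is optimal, the scalar map $\e\mapsto J_\l(t,v^*+\e v')=J_\l(t,v^*)+2\e\,\dbE\cI(v')+\e^2\dbE\cQ(v')$ attains its minimum at $\e=0$, forcing $\dbE\cI(v')=0$; since $v'$ is arbitrary and $Nv^*-L^\top y^*\in L_\dbF^2(t,T;\dbR^{m-n})$, the choice $v'=Nv^*-L^\top y^*$ gives $\dbE\int_t^T|Nv^*-L^\top y^*|^2ds=0$, which is exactly the stationarity condition \rf{stationarity}. Conversely, if \rf{stationarity} holds then $\dbE\cI(v')=0$ for every $v'$, so $J_\l(t,v)=J_\l(t,v^*)+\dbE\cQ(v')\ges J_\l(t,v^*)$ and $v^*$ is optimal. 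I expect the only real technical care to lie in the It\^o step—certifying that the stochastic integral is a genuine martingale, so that its expectation vanishes—which is guaranteed by the standard $L^2$-estimates for \rf{B-state} and for the forward adjoint equation under \ref{(H1)}--\ref{(H2)}; the remainder is bookkeeping.
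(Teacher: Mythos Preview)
Your proposal is correct and follows essentially the same approach as the paper's proof: both expand $J_\l(t,v^*+\e v')$ as a quadratic in $\e$, use \ref{(H2)} to ensure nonnegativity of the second-order term, and apply It\^o's formula to $\lan y^*,x'\ran$ to reduce the first-order term to $\dbE\int_t^T\lan Nv^*-L^\top y^*,v'\ran\,ds$. The only cosmetic differences are that the paper works directly with the perturbation $v$ rather than $v'=v-v^*$, and that you add the explicit choice $v'=Nv^*-L^\top y^*$ and a remark on the martingale property, neither of which the paper spells out.
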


\begin{proof}
First note that $v^*$ is optimal if and only if
\begin{equation}\label{3.14-1}
J_\l(t,v^*+\e v) -J_\l(t,v^*)\ges0, \q\forall \e\in\dbR,~\forall v\in L_\dbF^2(t,T;\dbR^{m-n}).
\end{equation}
For fixed but arbitrary $\e\in\dbR$ and $v\in L_\dbF^2(t,T;\dbR^{m-n})$, we have by linearity that
the adapted solution $(x_\e,z_\e)$ to
$$\left\{\begin{aligned}
  dx_\e(s) &= [Ax_\e+Kz_\e+L(v^*+\e v)]ds + z_\e dW(s), \q s\in[t,T],\\
   x_\e(T) &= \eta,
\end{aligned}\right.$$
is the sum of $(x^*,z^*)$ and $\e(x,z)$, where $(x,z)$ is the adapted solution to
$$\left\{\begin{aligned}
  dx(s) &= (Ax+Kz+Lv)ds + z dW(s), \q s\in[t,T],\\
   x(T) &= 0.
\end{aligned}\right.$$
Then it follows by a straightforward computation that
\begin{align*}
J_\l(t,v^*+\e v) &= \e^2\dbE\bigg[\lan Gx(t),x(t)\ran
                    +\int_t^T\(\lan Qx,x\ran + \lan Rz,z\ran + \lan Nv,v\ran\)ds\bigg] \\
                 &\hp{=\ } +2\e\dbE\bigg[\lan Gx^*(t)+F^\top\l,x(t)\ran
                    +\int_t^T\(\lan Qx^*,x\ran + \lan Rz^*,z\ran + \lan Nv^*,v\ran\)ds\bigg] \\
                 &\hp{=\ } +J_\l(t,v^*).
\end{align*}
Thus, \rf{3.14-1} in turn is equivalent to
\begin{align}\label{3.14-2}
& \e^2\dbE\bigg[\lan Gx(t),x(t)\ran
    +\int_t^T\(\lan Qx,x\ran + \lan Rz,z\ran + \lan Nv,v\ran\)ds\bigg] \nn\\
&~ +2\e\dbE\bigg[\lan Gx^*(t)+F^\top\l,x(t)\ran
    +\int_t^T\(\lan Qx^*,x\ran + \lan Rz^*,z\ran + \lan Nv^*,v\ran\)ds\bigg]\ges0
\end{align}
for all $\e\in\dbR$ and all $v\in L_\dbF^2(t,T;\dbR^{m-n})$.
Since the term in the first square bracket is nonnegative by the assumption \ref{(H2)},
\rf{3.14-2} holds for all $\e\in\dbR$ if and only if
%
\begin{equation}\label{3.14-3}
\dbE\bigg[\lan Gx^*(t)+F^\top\l,x(t)\ran
    +\int_t^T\(\lan Qx^*,x\ran + \lan Rz^*,z\ran + \lan Nv^*,v\ran\)ds\bigg]=0.
\end{equation}
Now by applying It\^{o}'s rule to $s\mapsto\lan y^*(s),x(s)\ran$, we obtain
\begin{align*}
\dbE\lan Gx^*(t)+F^\top\l,x(t)\ran =\dbE\lan y^*(t),x(t)\ran
  = -\dbE\int_t^T\(\lan Qx^*,x\ran+\lan L^\top y^*,v\ran+\lan Rz^*,z\ran\)ds,
\end{align*}
substituting which into \rf{3.14-3} yields
$$\dbE\int_t^T\lan Nv^*- L^\top y^*,v\ran ds=0.$$
Since the above has to be true for all $v\in L_\dbF^2(t,T;\dbR^{m-n})$, \rf{stationarity} follows.
The sufficiency of \rf{stationarity} can be proved by reversing the above argument.
\end{proof}

We call \rf{xzy-star}, together with the stationarity condition \rf{stationarity},
the {\it optimality system} for\autoref{BLQ}.
Note that from \rf{stationarity} we can represent the optimal control $v^*$ in terms
of $y^*$ as $v^*= N^{-1}L^\top y^*$.
Substituting for $v^*$ then brings a coupling into the FBSDE \rf{xzy-star}.
So in order to find the optimal control $v^*$, one actually need to solve a coupled FBSDE.

\medskip

To construct an optimal control for\autoref{BLQ} from the optimality system
\rf{xzy-star}-\rf{stationarity}, we now introduce the following Riccati-type equation:
\bel{Ric-Sigma}\left\{\begin{aligned}
  & \dot\Si-\Si A^\top-A\Si-\Si Q\Si+LN^{-1}L^\top+K(I_n+\Si R)^{-1}\Si K^\top=0, \q s\in[0,T],\\
  & \Si(T)=0.
\end{aligned}\right.\ee
It was shown in \cite{Lim-Zhou 2001} (see also \cite{Li-Sun-Xiong 2017} for an alternative proof)
that equation \rf{Ric-Sigma} has a unique positive semidefinite solution $\Si\in C([0,T];\dbS^n)$:
$$\Si(s)^\top=\Si(s), \q \Si(s)\ges0; \q\forall s\in[0,T].$$
This allows us to consider the following BSDE:
\bel{BSDE-f}\left\{\begin{aligned}
  d\f(s) &= [(A+\Si Q)\f +K(I_n+\Si R)^{-1}\b]ds + \b dW(s), \q s\in[0,T],\\
   \f(T) &= -\eta,
\end{aligned}\right.\ee
which, by the standard result for BSDEs, admits a unique adapted solution
$$(\f,\b)\in L_\dbF^2(\Om;C([0,T];\dbR^n))\times L_\dbF^2(0,T;\dbR^n).$$
Consider further the following $(\f,\b,\l)$-dependent SDE:
\bel{SDE-y}\left\{\begin{aligned}
dy(s) &= -[(A^\top\1n+Q\Si)y+Q\f]ds -(I_n+R\Si)^{-1}(K^\top y+R\b)dW(s), ~s\in[t,T],\\
 y(t) &= [I_n+G\Si(t)]^{-1}[F^\top\l-G\f(t)].
\end{aligned}\right.\ee
Obviously, \rf{SDE-y} is uniquely solvable.

\begin{theorem}\label{thm:solution-BLQ-lamda}
Let {\rm\ref{(H1)}--\ref{(H2)}} hold. Then\autoref{BLQ} admits a unique optimal control which is given by
$$v^*_\l(s)=N(s)^{-1}L(s)^\top y(s), \q s\in[t,T],$$
where $y$ is the solution to the SDE \rf{SDE-y}.
\end{theorem}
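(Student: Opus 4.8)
The plan is to invoke \autoref{thm:optmality}, which reduces the assertion to exhibiting an adapted solution $(x^*,z^*,y^*)$ of the coupled FBSDE \eqref{xzy-star} that satisfies the stationarity condition \eqref{stationarity}, and then to identify its adjoint component $y^*$ with the process $y$ solving \eqref{SDE-y}. Since \eqref{xzy-star} couples the backward state $x^*$ (terminal datum $\eta$) with the forward adjoint $y^*$ (initial datum $Gx^*(t)+F^\top\l$), the natural decoupling device is the linear relation
$$x^*(s)=-\Si(s)y^*(s)-\f(s),\qquad s\in[t,T],$$
where $\Si$ is the Riccati solution of \eqref{Ric-Sigma} and $(\f,\b)$ is the adapted solution of \eqref{BSDE-f}. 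Because $\Si(T)=0$ and $\f(T)=-\eta$, this ansatz automatically yields the terminal value $x^*(T)=\eta$.

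Concretely, I would take $y^*:=y$, the unique solution of \eqref{SDE-y}, set $v^*:=N^{-1}L^\top y$, and define $x^*:=-\Si y-\f$ together with the candidate
$$z^*:=(I_n+\Si R)^{-1}\big(\Si K^\top y-\b\big).$$
The initial condition of \eqref{SDE-y}, namely $y(t)=[I_n+G\Si(t)]^{-1}[F^\top\l-G\f(t)]$, is exactly the rearrangement of $y^*(t)=Gx^*(t)+F^\top\l$ after inserting $x^*(t)=-\Si(t)y(t)-\f(t)$; here the invertibility of $I_n+G\Si(t)$ follows from $G\ges0$ and $\Si(t)\ges0$, since the eigenvalues of $G\Si(t)$ are nonnegative. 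Thus the forward initial datum in \eqref{xzy-star} is met.

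The heart of the argument is an It\^o computation applied to $s\mapsto-\Si(s)y(s)-\f(s)$ using \eqref{SDE-y} and \eqref{BSDE-f}. I would first read off the diffusion coefficient of $x^*$ and confirm it equals $z^*$, relying on the elementary identity $\Si(I_n+R\Si)^{-1}=(I_n+\Si R)^{-1}\Si$; one then checks that with this $z^*$ the expression $-K^\top y^*+Rz^*$ reproduces the diffusion term of \eqref{SDE-y}, matching the $y^*$-equation of \eqref{xzy-star}. Matching the drift of $x^*$ with $Ax^*+Kz^*+Lv^*$ is where \eqref{Ric-Sigma} enters: after the $A\f$ and $K(I_n+\Si R)^{-1}\b$ terms cancel, the residual identity, viewed as an operator acting on $y$, collapses precisely to the Riccati equation and so holds by the choice of $\Si$. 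The $y^*$-drift $-A^\top y^*+Qx^*$ coincides with that of \eqref{SDE-y} upon substituting $x^*=-\Si y-\f$, and the stationarity \eqref{stationarity} is immediate from $v^*=N^{-1}L^\top y$. Hence $(x^*,z^*,y^*)$ solves the optimality system, and by \autoref{thm:optmality} the control $v^*_\l=N^{-1}L^\top y$ is optimal.

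Uniqueness would then follow from strict convexity of $v\mapsto J_\l(t,v)$: under \ref{(H2)} the perturbation quadratic form appearing in the proof of \autoref{thm:optmality} dominates $\d\,\dbE\int_t^T|v|^2ds$, so the cost is strictly convex and its minimizer unique. I expect the main obstacle to be the bookkeeping in the drift-matching step—correctly tracking the $(I_n+\Si R)^{-1}$ and $(I_n+R\Si)^{-1}$ factors and watching the algebra condense into \eqref{Ric-Sigma}—while the sign conventions in the decoupling ansatz are the easiest place to slip.
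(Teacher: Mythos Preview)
Your proposal is correct and follows essentially the same route as the paper: invoke \autoref{thm:optmality}, posit the decoupling ansatz $x^*=-\Si y-\f$, $z^*=(I_n+\Si R)^{-1}(\Si K^\top y-\b)$, and verify via It\^o's formula (using \eqref{Ric-Sigma} for the drift and the identity $\Si(I_n+R\Si)^{-1}=(I_n+\Si R)^{-1}\Si$ for the diffusion) that the resulting triple $(x^*,z^*,y)$ solves the optimality system \eqref{xzy-star}--\eqref{stationarity}. The only addition relative to the paper is your explicit strict-convexity argument for uniqueness, which the paper's proof does not spell out.
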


\begin{proof}
Let $(x,z)$ be the adapted solution to the BSDE
$$\left\{\begin{aligned}
  dx(s) &= (Ax+Kz+Lv^*_\l)ds + zdW(s), \q s\in[t,T],\\
   x(T) &= \eta.
\end{aligned}\right.$$
According to \autoref{thm:optmality}, it suffices to verify that the solution $y$ of \rf{SDE-y} satisfies the SDE
$$\left\{\begin{aligned}
  dy(s) &= (-A^\top y + Qx)ds + (-K^\top y+Rz)dW(s), \q s\in[t,T],\\
   y(t) &= Gx(t)+F^\top\l.
\end{aligned}\right.$$
This can be accomplished if we are able to show that
\begin{equation}\label{Decoupling}
x(s) = -[\Si(s)y(s)+\f(s)], \q z(s) = [I_n+\Si(s) R(s)]^{-1}[\Si(s) K(s)^\top y(s)-\b(s)].
\end{equation}
Indeed, if \rf{Decoupling} holds, then the first relation gives
$$Gx(t)+F^\top\l=-G\Si(t)y(t)-G\f(t)+F^\top\l,$$
which, together with the initial condition in \rf{SDE-y}, implies that
\begin{align*}
y(t) &= -G\Si(t)y(t)+[I_n+G\Si(t)]y(t)=-G\Si(t)y(t)+F^\top\l-G\f(t) \\
     &= Gx(t)+F^\top\l.
\end{align*}
Furthermore,
$$-[(A^\top\1n+Q\Si)y + Q\f] = -A^\top y + Qx,$$
and using the second relation in \rf{Decoupling} we obtain
\begin{align*}
K^\top y -(I_n+R\Si)^{-1}(K^\top y+R\b)
  &= [I_n-(I_n+R\Si)^{-1}]K^\top y -(I_n+R\Si)^{-1}R\b \\
  &= (I_n+R\Si)^{-1}R\Si K^\top y -(I_n+R\Si)^{-1}R\b \\
  &= (I_n+R\Si)^{-1}R(\Si K^\top y -\b) \\
  &= R(I_n+\Si R)^{-1}(\Si K^\top y -\b) \\
  &= Rz,
\end{align*}
and hence $-(I_n+R\Si)^{-1}(K^\top y+R\b)= -K^\top y+Rz$.

\medskip

In order to prove \rf{Decoupling}, let us denote
$$ \hat x(s) \deq -[\Si(s)y(s)+\f(s)], \q \hat z(s) \deq [I_n+\Si(s) R(s)]^{-1}[\Si(s) K(s)^\top y(s)-\b(s)].$$
Thanks to the uniqueness of an adapted solution, our proof will be complete if we can show that $(\hat x,\hat z)$
satisfies the same BSDE as $(x,z)$.
To this end, we first note that $\hat x(T)=-[\Si(T)y(T)+\f(T)]=\eta$. Moreover, by It\^{o}'s rule,
\begin{align}\label{eqn:hatx}
d\hat x &= d(-\Si y-\f)= -\dot\Si y ds -\Si dy -d\f \nn\\
        &= -\dot\Si y ds +\Si[(A^\top\1n+Q\Si)y+Q\f]ds +\Si(I_n+R\Si)^{-1}(K^\top y+R\b)dW \nn\\
        &\hp{=\ } -\1n[(A+\Si Q)\f +K(I_n+\Si R)^{-1}\b]ds - \b dW \nn\\
        &= [(-\dot\Si+ \Si A^\top +\Si Q\Si)y -A\f-K(I_n+\Si R)^{-1}\b]ds \nn\\
        &\hp{=\ } +\1n\{\Si(I_n+R\Si)^{-1}K^\top y + [\Si(I_n+R\Si)^{-1}R-I_n]\b\}dW.
\end{align}
Using \rf{Ric-Sigma}, we can rewrite the drift term in \rf{eqn:hatx} as
\begin{align*}
& (-\dot\Si+ \Si A^\top +\Si Q\Si)y -A\f-K(I_n+\Si R)^{-1}\b \\
&\q = [-A\Si + LN^{-1}L^\top +K(I_n+\Si R)^{-1}\Si K^\top]y -A\f-K(I_n+\Si R)^{-1}\b \\
&\q = -A(\Si y+\f) + LN^{-1}L^\top y + K(I_n+\Si R)^{-1}(\Si K^\top y-\b) \\
&\q = A\hat x + Lv^*_\l + K\hat z.
\end{align*}
Using the fact that
$$\Si(I_n+R\Si)^{-1} = (I_n+\Si R)^{-1}\Si, \q \Si(I_n+R\Si)^{-1}R-I_n = -(I_n+\Si R)^{-1},$$
we can rewrite the diffusion term in \rf{eqn:hatx} as
\begin{align*}
& \Si(I_n+R\Si)^{-1}K^\top y + [\Si(I_n+R\Si)^{-1}R-I_n]\b \\
&\q = (I_n+\Si R)^{-1}\Si K^\top y -(I_n+\Si R)^{-1}\b \\
&\q = \hat z.
\end{align*}
This shows that $(\hat x,\hat z)$ satisfies the same BSDE as $(x,z)$ and hence completes the proof.
\end{proof}

\section{Selection of optimal parameters}\label{Sec:Main-result}

In this section we show how to find a $\l\in L^2_{\cF_t}(\Om;\dbR^k)$, called an {\it optimal parameter},
such that the corresponding optimal state process of\autoref{BLQ} satisfies $x^*_\l(t)\in\cH(F,b)$.
It is worth pointing out that the usual method of Lagrange multipliers does not work efficiently in our situation,
due to the difficulty in computing the derivative of $J_\l(t,v^*_\l)$ in $\l$.
The key of our approach is to establish an equivalence relationship between the controllability of \rf{state*}
and a system involving $\Si$, the solution of the Riccati equation \rf{Ric-Sigma}.
It turns out that an optimal parameter exists and can be obtained by solving an algebraic equation.

\medskip

Recall that $\Si$ and $(\f,\b)$ are the unique solutions to equations \rf{Ric-Sigma} and \rf{BSDE-f}, respectively.
The main result of this section can be stated as follows.

\begin{theorem}\label{thm:main}
Let {\rm\ref{(H1)}--\ref{(H2)}} hold. If the state of system \rf{state*} can be transferred
to $(T,\eta)$ from the stochastic linear manifold $\cH(F,b)$, then the algebraic equation
\bel{Eqn-lamda} \Big\{F[I_n+\Si(t)G]^{-1}\Si(t)F^\top\Big\}\l = -\Big\{ F[I_n+\Si(t)G]^{-1}\f(t)+b \Big\}\ee
has a solution. Moreover, any solution $\l^*$ of \rf{Eqn-lamda} is an optimal optimal parameter,
and the optimal controls $v^*$ of\autoref{CBLQ} are given by
\begin{align*}
v^*(s) &= N(s)^{-1}L(s)^\top y^*(s), \q s\in[t,T],
\end{align*}
where $y^*$ is the solution of
\bel{eqn:y*}\left\{\begin{aligned}
dy^*(s) &= -[(A^\top\1n+Q\Si)y^*+Q\f]ds -(I_n+R\Si)^{-1}(K^\top y^*+R\b)dW, ~s\in[t,T],\\
 y^*(t) &= [I_n+G\Si(t)]^{-1}[F^\top\l^*-G\f(t)].
\end{aligned}\right.\ee
\end{theorem}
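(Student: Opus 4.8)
The plan is to show that the algebraic equation \rf{Eqn-lamda} is nothing but the requirement $x^*_\l(t)\in\cH(F,b)$ for the optimal state of\autoref{BLQ}, and then to read its solvability off the transferability hypothesis. First I would invoke the decoupling relation \rf{Decoupling} established in the proof of\autoref{thm:solution-BLQ-lamda}, namely $x^*_\l(t)=-[\Si(t)y(t)+\f(t)]$, together with the initial value $y(t)=[I_n+G\Si(t)]^{-1}[F^\top\l-G\f(t)]$ prescribed in \rf{SDE-y}. Using the resolvent identity $\Si(t)[I_n+G\Si(t)]^{-1}=[I_n+\Si(t)G]^{-1}\Si(t)$ and the simplification $[I_n+\Si(t)G]^{-1}\Si(t)G-I_n=-[I_n+\Si(t)G]^{-1}$, a short computation collapses this to
$$x^*_\l(t)=-[I_n+\Si(t)G]^{-1}\big[\Si(t)F^\top\l+\f(t)\big].$$
Applying $F$ and imposing $Fx^*_\l(t)=b$ reproduces \rf{Eqn-lamda} exactly, so $\l$ solves \rf{Eqn-lamda} if and only if $x^*_\l(t)\in\cH(F,b)$.

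Granting solvability for the moment, the ``moreover'' part is then immediate: for any solution $\l^*$ the state $x^*_{\l^*}(t)$ lies on $\cH(F,b)$, so\autoref{prop:relation} shows that the optimal control $v^*_{\l^*}=N^{-1}L^\top y$ of\autoref{BLQ} supplied by\autoref{thm:solution-BLQ-lamda} is also optimal for\autoref{CBLQ}; since \rf{SDE-y} with $\l=\l^*$ is precisely \rf{eqn:y*}, this yields the stated formula for $v^*$.

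The real work is the solvability of \rf{Eqn-lamda}, which I would treat as a range-membership question. Writing the equation as $M\l=c$ with $M\deq F\Pi F^\top$, $\Pi\deq\Si(t)[I_n+G\Si(t)]^{-1}=[I_n+\Si(t)G]^{-1}\Si(t)$, and $c\deq-\{F[I_n+\Si(t)G]^{-1}\f(t)+b\}$, I note that $\Pi$ is symmetric positive semidefinite (indeed $\Pi=\Si(t)^{1/2}[I_n+\Si(t)^{1/2}G\Si(t)^{1/2}]^{-1}\Si(t)^{1/2}$). Hence\autoref{lmm:range=range} gives $\sR(M)=\sR(F\Pi)$, and since right multiplication by the invertible matrix $[I_n+G\Si(t)]^{-1}$ leaves the range unchanged, $\sR(F\Pi)=\sR(F\Si(t))$. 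Thus \rf{Eqn-lamda} is solvable — and is then solved by $\l^*=M^+c$, which lies in $L^2_{\cF_t}(\Om;\dbR^k)$ because $M^+$ is a fixed matrix — precisely when $c\in\sR(F\Si(t))$ almost surely.

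It remains to deduce $c\in\sR(F\Si(t))$ from the hypothesis, and this is the step I expect to be the main obstacle. Two observations feed in. First, the initial state $x^*_0(t)=-[I_n+\Si(t)G]^{-1}\f(t)$ (taking $\l=0$) is by construction steered to $\eta$ at $T$ by $v^*_0$, so\autoref{prop:controllability} places $x^*_0(t)-\dbE[\F(t,T)\eta|\cF_t]$ in $\sR(\Psi(t,T))$. Second, the transferability hypothesis and\autoref{coro:controllability}(iii) give $b-F\dbE[\F(t,T)\eta|\cF_t]\in\sR(F\Psi(t,T))$. Subtracting, $c=Fx^*_0(t)-b=F\big(x^*_0(t)-\dbE[\F(t,T)\eta|\cF_t]\big)-\big(b-F\dbE[\F(t,T)\eta|\cF_t]\big)$ lies in $\sR(F\Psi(t,T))$. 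The crux is therefore the purely deterministic range identity $\sR(\Si(t))=\sR(\Psi(t,T))$ linking the Riccati solution to the controllability Gramian \rf{Grammian}; with it one gets $\sR(F\Psi(t,T))=\sR(F\Si(t))$ and the argument closes. I would establish this identity by introducing an auxiliary controlled system, tied to $\Si$ through the decoupling, that is controllability-equivalent to \rf{state*} (so that\autoref{prop:controllability} applies to both) and whose controllability Gramian is exactly $\Si(t)$; matching the reachable sets then forces the two ranges to coincide. This auxiliary-system construction, rather than any of the algebra above, is the delicate part.
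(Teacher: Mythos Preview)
Your proposal is correct and shares the paper's core strategy: the decoupling formula for $x^*_\l(t)$, the use of \autoref{lmm:range=range}, and above all the construction of an auxiliary system controllability-equivalent to \rf{state*} whose Gramian over $[t,T]$ is exactly $\Si(t)$ (this is the content of the paper's Propositions \ref{prop:x=hatx} and \ref{prop:Gramian-Si}). The one tactical difference is that the paper never passes through $\Psi(t,T)$ or your observation about $x^*_0(t)$: it applies \autoref{coro:controllability}(iii) \emph{directly to the auxiliary system}, recognizing via \autoref{lemma:Y-formula} that the BSDE \rf{BSDE-f} has precisely the drift coefficients $\h A,\h K$ of that system, so $\dbE[\Pi(t,T)\eta\mid\cF_t]=-\f(t)$ and hence $b+F\f(t)\in\sR(F\Si(t))$ in a single step; an explicit substitution $\l=[I_n+G\Si(t)]\xi-G\f(t)$ then places $-c$ in $\sR(F\Pi)$, and \autoref{lmm:range=range} closes as you do. Your intermediate range identity $\sR(\Si(t))=\sR(\Psi(t,T))$ is true (compare the two reachability criteria at $x_1=0$), but is an avoidable detour.
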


In preparation for the proof of \autoref{thm:main}, let us consider the following system:
\bel{hatx}d\hat x(s) = [\h A(s)\hat x(s)+\h K(s)\hat z(s)+\h L(s)\hat v(s)]ds+\hat z(s)dW(s),\ee
where the coefficients are given by
\begin{align}\label{hatAKL}
\h A = A+\Si Q, \q \h K= K(I_n+\Si R)^{-1}, \q \h L = (LN^{-{1\over2}},\, -\Si Q^{1\over2},\, K(I_n+\Si R)^{-1}\Si R^{1\over2}).
\end{align}
The following result shows that the controllability of system \rf{state*}
is equivalent to that of system \rf{hatx}.

\begin{proposition}\label{prop:x=hatx}
Let {\rm\ref{(H1)}--\ref{(H2)}} hold.
Let $0\les t_0<t_1\les T$, $x_0\in L^2_{\cF_{t_0}}(\Om;\dbR^n)$ and $x_1\in L^2_{\cF_{t_1}}(\Om;\dbR^n)$.
A control $(z,v)\in L_\dbF^2(t_0,t_1;\dbR^n)\times L_\dbF^2(t_0,t_1;\dbR^{m-n})$ transfers $(t_0,x_0)$
to $(t_1,x_1)$ for system \rf{state*} if and only if the control $(\hat z,\hat v)$ defined by
\bel{hatzv} \hat z(s)\deq z(s),
\q \hat v(s)\deq \begin{pmatrix}[N(s)]^{1\over2}v(s)\\ [Q(s)]^{1\over2}x(s) \\ [R(s)]^{1\over2}z(s)\end{pmatrix};
\q s\in[t_0,t_1] \ee
does so for system \rf{hatx}, where $x$ is the solution of \rf{state*}
with respect to the initial pair $(t_0,x_0)$ and the control $(z,v)$.
\end{proposition}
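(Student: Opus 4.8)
The plan is to follow the pattern of \autoref{prop:[ACBD]-[ACBD]*}: I will show that the state process $x$ generated by the control $(z,v)$ in \rf{state*} is \emph{itself} a solution of \rf{hatx} when that system is driven by the transformed control $(\h z,\h v)$ of \rf{hatzv}, so that uniqueness of adapted solutions forces the two state trajectories to coincide and the two transfer conditions to be equivalent. First I would check that $(\h z,\h v)$ is admissible: since \ref{(H2)} makes $Q,R,N$ bounded and symmetric with $Q,R\ges0$ and $N\ges\d I_{m-n}$, their square roots $Q^{1/2},R^{1/2},N^{1/2}$ are bounded and measurable, so $\h z=z\in L_\dbF^2(t_0,t_1;\dbR^n)$ and, because $x\in L_\dbF^2(\Om;C([t_0,t_1];\dbR^n))$, the stacked process $\h v$ lies in the appropriate $L_\dbF^2$ space.

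The core of the argument is a direct verification that, after substituting $\h x=x$, $\h z=z$ and $\h v$ from \rf{hatzv} into \rf{hatx}, its drift and diffusion reduce to those of \rf{state*}. The diffusion matches trivially because $\h z=z$. For the drift, substituting the definitions \rf{hatAKL} gives
\begin{align*}
\h A x+\h K\h z+\h L\h v
 &= (A+\Si Q)x + K(I_n+\Si R)^{-1}z \\
 &\hp{=\ } + Lv - \Si Q x + K(I_n+\Si R)^{-1}\Si R\,z,
\end{align*}
where the two $\Si Q x$ terms cancel and the two terms carrying $K(I_n+\Si R)^{-1}$ combine through $K(I_n+\Si R)^{-1}(I_n+\Si R)z=Kz$, leaving precisely $Ax+Kz+Lv$, the drift of \rf{state*}. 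I would stress that this identity is purely algebraic and does \emph{not} use the Riccati equation \rf{Ric-Sigma}; the three blocks of $\h L$ in \rf{hatAKL} were chosen exactly so that $\h L\h v$ reproduces $Lv-\Si Q x+\h K\Si R\,z$.

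Consequently $x$ satisfies both \rf{state*} under $(z,v)$ and \rf{hatx} under $(\h z,\h v)$, and both are forward SDEs started from the same datum $x_0$ at $t_0$ with bounded measurable coefficients; hence uniqueness yields $\h x=x$ on $[t_0,t_1]$, where $\h x$ is the solution of \rf{hatx} driven by $(\h z,\h v)$. In particular $\h x(t_1)=x(t_1)$, so $x(t_1)=x_1$ holds if and only if $\h x(t_1)=x_1$, which is exactly the asserted equivalence (and, since the transform $(z,v)\mapsto(\h z,\h v)$ is built from this common trajectory, both implications follow at once). I expect the only delicate point to be the bookkeeping in the drift cancellation---in particular checking that the square-root factors in \rf{hatzv} pair correctly with the corresponding blocks of \rf{hatAKL}---after which the uniqueness step closes the proof immediately.
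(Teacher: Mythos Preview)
Your proposal is correct and follows essentially the same approach as the paper: both arguments substitute the definitions \rf{hatAKL}--\rf{hatzv} into the drift of \rf{hatx}, observe that the resulting expression reduces to the drift of \rf{state*} (the paper writes it as $A\hat x+\Si Q(\hat x-x)+Kz+Lv$ and then notes $x$ is a solution, which is the same cancellation you carry out explicitly), and conclude $\hat x=x$ by uniqueness of the forward SDE. Your additional remark on the admissibility of $(\hat z,\hat v)$ is a small refinement the paper leaves implicit.
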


\begin{proof}
Let $(\hat z,\hat v)$ be defined by \rf{hatzv} and $\hat x$ be the solution to
\bel{hatx-5.21}\left\{\begin{aligned}
d\hat x(s)   &= [\h A(s)\hat x(s)+\h K(s)\hat z(s)+\h L(s)\hat v(s)]ds + \hat z(s)dW(s), \q s\in[t_0,t_1],\\
 \hat x(t_0) &= x_0.
\end{aligned}\right.\ee
We prove the assertion by showing $\hat x=x$. Substituting \rf{hatAKL} and \rf{hatzv} into \rf{hatx-5.21}, we have
\bel{hatx=x}\left\{\begin{aligned}
d\hat x(s)   &= [A\hat x+\Si Q(\hat x-x)+Kz+Lv]ds + z dW(s), \q s\in[t_0,t_1],\\
 \hat x(t_0) &= x_0.
\end{aligned}\right.\ee
Clearly, $x$ is also a solution of \rf{hatx=x} and hence $x=\hat x$ by the uniqueness of a solution.
\end{proof}

Although the system \rf{hatx} looks more complicated than \rf{state*},
the controllability Gramian of \rf{hatx} takes a simpler form, as shown by the following result.

\begin{proposition}\label{prop:Gramian-Si}
Let {\rm\ref{(H1)}--\ref{(H2)}} hold. For any $t\in[0,T]$,
the controllability Gramian of system \rf{hatx} over $[t,T]$ is $\Si(t)$.
\end{proposition}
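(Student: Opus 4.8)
The plan is to identify the controllability Gramian of \rf{hatx} directly from its definition and then show it equals $\Si(t)$ through an It\^o argument that converts the defining integral into the Riccati equation \rf{Ric-Sigma}. First I would note that \rf{hatx} has the canonical shape of \rf{[ACBD]*}, with the triple $(\bar A,K,L)$ there replaced by $(\h A,\h K,\h L)$ from \rf{hatAKL}. Consequently, by the Remark following \autoref{prop:controllability}, the Gramian of \rf{hatx} over $[t,T]$ is
$$\h\Psi(t,T)=\dbE\lt[\int_t^T\h\F(t,s)\h L(s)\h L(s)^\top\h\F(t,s)^\top\,ds\rt],$$
where $\h\F(t,s)=\h\F(t)^{-1}\h\F(s)$ and $\h\F$ solves $d\h\F(s)=-\h\F(s)\h A(s)ds-\h\F(s)\h K(s)dW(s)$ with $\h\F(0)=I_n$. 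The goal then reduces to proving $\h\Psi(t,T)=\Si(t)$.

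The key preliminary step I would carry out is a purely algebraic reformulation of \rf{Ric-Sigma}. Expanding $\h L\h L^\top$ from \rf{hatAKL} as a sum of three squared blocks yields $LN^{-1}L^\top+\Si Q\Si+K(I_n+\Si R)^{-1}\Si R\Si(I_n+R\Si)^{-1}K^\top$. Inserting $\h A=A+\Si Q$ and $\h K=K(I_n+\Si R)^{-1}$ and invoking the symmetry relation $(I_n+\Si R)^\top=I_n+R\Si$ together with the commutation identity $\Si(I_n+R\Si)^{-1}=(I_n+\Si R)^{-1}\Si$ recorded in the proof of \autoref{thm:solution-BLQ-lamda}, I would check that \rf{Ric-Sigma} is precisely the Lyapunov-type equation
$$\dot\Si=\h A\Si+\Si\h A^\top-\h K\Si\h K^\top-\h L\h L^\top,\q \Si(T)=0.$$
The one genuine cancellation is that the two $K$-blocks merge, via $\Si(I_n+R\Si)^{-1}+\Si R\Si(I_n+R\Si)^{-1}=\Si$, into the single term $K(I_n+\Si R)^{-1}\Si K^\top$ of \rf{Ric-Sigma}; one of the two copies of $\Si Q\Si$ coming from $\h A\Si+\Si\h A^\top$ is then absorbed by the $\Si Q\Si$ inside $\h L\h L^\top$.

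With this identity available, I would apply It\^o's rule to $s\mapsto\h\F(t,s)\Si(s)\h\F(t,s)^\top$ on $[t,T]$. Since $d_s\h\F(t,s)=-\h\F(t,s)\h A\,ds-\h\F(t,s)\h K\,dW$ and $\h\F(t,t)=I_n$, the bounded-variation part of the differential equals $\h\F(t,s)\big[\dot\Si-\h A\Si-\Si\h A^\top+\h K\Si\h K^\top\big]\h\F(t,s)^\top ds$, which by the Lyapunov equation collapses to $-\h\F(t,s)\h L\h L^\top\h\F(t,s)^\top ds$; the $dW$-terms form a genuine martingale because \ref{(H1)}--\ref{(H2)} and the continuity of $\Si$ guarantee the required square-integrability. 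Integrating over $[t,T]$, taking expectations, and using $\Si(T)=0$ gives $\Si(t)=\dbE\int_t^T\h\F(t,s)\h L\h L^\top\h\F(t,s)^\top ds=\h\Psi(t,T)$, which is the claim.

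I expect the main obstacle to be the algebraic step: one must arrange the block expansion of $\h L\h L^\top$ and the resolvent identities so that the Riccati equation \rf{Ric-Sigma} is recognized as the Lyapunov equation for the triple $(\h A,\h K,\h L)$. Once that recognition is made, the It\^o computation is essentially mechanical, the only point needing a word being the martingale property of the stochastic integral, which is immediate from the boundedness of the coefficients and the standard $L^2$-estimate for $\h\F$.
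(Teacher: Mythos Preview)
Your proposal is correct and follows essentially the same route as the paper: both apply It\^o's rule to $\h\F(t,\cdot)\Si\h\F(t,\cdot)^\top$ (the paper writes $\Pi$ for your $\h\F$), reduce the drift to $-\h\F\h L\h L^\top\h\F^\top$ via the Riccati equation, and integrate using $\Si(T)=0$. The only cosmetic difference is that you first recast \rf{Ric-Sigma} as the Lyapunov identity $\dot\Si=\h A\Si+\Si\h A^\top-\h K\Si\h K^\top-\h L\h L^\top$ and then compute, whereas the paper performs the It\^o expansion first and carries out the same algebraic simplification afterward.
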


\begin{proof}
Let $\Pi=\{\Pi(s);0\les s\les T\}$ be the solution to the following SDE for $\dbR^{n\times n}$-valued processes:
\bel{eqn:Pi}\lt\{\begin{aligned}
d\Pi(s)  &= -\Pi(s)\h A(s)ds-\Pi(s)\h K(s)dW(s),\q s\in[0,T],\\
 \Pi(0)  &= I_n,
\end{aligned}\rt.\ee
and let $\Pi(t,s)=\Pi(t)^{-1}\Pi(s)$. By \autoref{prop:controllability},
the controllability Gramian of system \rf{hatx} over $[t,T]$ is
$$\dbE\lt\{\int_t^T\Pi(t,s)\h L(s)\big[\Pi(t,s)\h L(s)\big]^\top ds\rt\}.$$
On the other hand, we have by It\^{o}'s rule that
\begin{align*}
d\(\Pi\Si\Pi^\top\)
  &= -\,\Pi(A+\Si Q)\Si\Pi^\top ds - \Pi K(I_n+\Si R)^{-1}\Si\Pi^\top dW \\
  &\hp{=\ } + \Pi\dot\Si\Pi^\top ds - \Pi\Si(A+\Si Q)^\top\Pi^\top ds - \Pi\Si(I_n+R\Si)^{-1}K^\top\Pi^\top dW \\
  &\hp{=\ } + \Pi K(I_n+\Si R)^{-1}\Si(I_n+R\Si)^{-1}K^\top\Pi^\top ds \\
  &= -\,\Pi\[(A+\Si Q)\Si - \dot\Si + \Si(A+\Si Q)^\top \\
  &\hp{=-\,\Pi\[~} - K(I_n+\Si R)^{-1}\Si(I_n+R\Si)^{-1}K^\top\]\Pi^\top ds \\
  &\hp{=\ } - \Pi\[K(I_n+\Si R)^{-1}\Si + \Si(I_n+R\Si)^{-1}K^\top\]\Pi^\top dW \\
  &= -\,\Pi\[LN^{-1}L^\top + \Si Q\Si + K(I_n+\Si R)^{-1}\Si K^\top \\
  &\hp{=-\,\Pi\[~}- K(I_n+\Si R)^{-1}\Si(I_n+R\Si)^{-1}K^\top\]\Pi^\top ds \\
  &\hp{=\ } - \Pi\[K(I_n+\Si R)^{-1}\Si + \Si(I_n+R\Si)^{-1}K^\top\]\Pi^\top dW.
\end{align*}
Integration from $t$ to $T$ and then taking conditional expectations with respect to $\cF_t$ on both sides, we obtain
\begin{align*}
\Pi(t)\Si(t)\Pi(t)^\top &= \dbE\bigg\{\int_t^T\Pi\[LN^{-1}L^\top + \Si Q\Si + K(I_n+\Si R)^{-1}\Si K^\top \\
&\hp{=\dbE\bigg\{\Pi\[~} - K(I_n+\Si R)^{-1}\Si(I_n+R\Si)^{-1}K^\top\]\Pi^\top ds\bigg|\cF_t\bigg\}.
\end{align*}
Observe that
\begin{align*}
& LN^{-1}L^\top + \Si Q\Si + K(I_n+\Si R)^{-1}\Si K^\top - K(I_n+\Si R)^{-1}\Si(I_n+R\Si)^{-1}K^\top \\
&\q= LN^{-1}L^\top + \Si Q\Si + K\[(I_n+\Si R)^{-1}\Si R\Si(I_n+R\Si)^{-1}\]K^\top \\
&\q= \h L \h L^\top,
\end{align*}
and that $\Pi(t,s)$ is independt of $\cF_t$ for $s\ges t$. Then we have
\begin{align*}
\Si(t) &= \Pi(t)^{-1}\lt\{\dbE\int_t^T\Pi(s)\h L(s)\h L(s)^\top\Pi(s)^\top ds\bigg|\cF_t\rt\}\[\Pi(t)^{-1}\]^\top \\
       &= \dbE\lt\{\int_t^T\Pi(t,s)\h L(s)\big[\Pi(t,s)\h L(s)\big]^\top ds\bigg|\cF_t\rt\} \\
       &= \dbE\lt\{\int_t^T\Pi(t,s)\h L(s)\big[\Pi(t,s)\h L(s)\big]^\top ds\rt\}.
\end{align*}
This completes the proof.
\end{proof}

{\it Proof of \autoref{thm:main}.}
First note that the state of system \rf{hatx} can also be transferred to $(T,\eta)$ from the
stochastic linear manifold $\cH(F,b)$ (\autoref{prop:x=hatx}) and that the controllability
Gramian of system \rf{hatx} over $[t,T]$ is $\Si(t)$ (\autoref{prop:Gramian-Si}).
Thus, by \autoref{coro:controllability} (iii), there exists $\xi\in L^2_{\cF_t}(\Om;\dbR^k)$ satisfying
\bel{5.24-1} F\Si(t)\xi = b-F\dbE[\Pi(t,T)\eta|\cF_t], \ee
where $\Pi(t,T)=\Pi(t)^{-1}\Pi(T)$ with $\Pi=\{\Pi(s);0\les s\les T\}$ being the solution of \rf{eqn:Pi}.
Applying \autoref{lemma:Y-formula} to the BSDE \rf{BSDE-f}, we obtain
$$\dbE[\Pi(t,T)\eta|\cF_t] = -\f(t),$$
and hence \rf{5.24-1} becomes
$$F\Si(t)\xi = F\f(t) + b.$$
Now using the identity
$$ I_n-\Si(t)[I_n+G\Si(t)]^{-1}G = [I_n+\Si(t)G]^{-1}, $$
it is straightforward to verify that
$$\l = [I_n+G\Si(t)]\xi - G\f(t)$$
is a solution of
$$\Big\{F\Si(t)[I_n+G\Si(t)]^{-1}\Big\}\l =  F[I_n+\Si(t)G]^{-1}\f(t)+b.$$
That is, $F[I_n+\Si(t)G]^{-1}\f(t)+b$ lies in the range of $F\Si(t)[I_n+G\Si(t)]^{-1}$. Since
$$F\Si(t)[I_n+G\Si(t)]^{-1} = F[I_n+\Si(t)G]^{-1}\Si(t)$$
and $F[I_n+\Si(t)G]^{-1}\Si(t)$ and $F[I_n+\Si(t)G]^{-1}\Si(t)F^\top$ have the same range (\autoref{lmm:range=range}),
we see that $F[I_n+\Si(t)G]^{-1}\f(t)+b$ also lies in the range of $F[I_n+\Si(t)G]^{-1}\Si(t)F^\top$,
which means the algebraic equation \rf{Eqn-lamda} has a solution.

\medskip

For the second assertion, let $\l^*\in L^2_{\cF_t}(\Om;\dbR^k)$ and $y^*$ be the solution to the SDE \rf{eqn:y*}.
By \autoref{thm:solution-BLQ-lamda}, the process
$$v^*(s) \deq N(s)^{-1}L(s)^\top y^*(s), \q s\in[t,T]$$
is the optimal control of Problem (BLQ)$_{\l^*}$.
Further, let $(x^*,z^*)$ be the adapted solution to the BSDE
$$\left\{\begin{aligned}
  dx^*(s) &= (Ax^* + Kz^* + Lv^*)ds + z^* dW(s), \q s\in[t,T],\\
   x^*(T) &= \eta.
\end{aligned}\right.$$
We see from the proof of \autoref{thm:solution-BLQ-lamda} that
$(x^*,z^*)$ and $y^*$ have the following relation (recalling \rf{Decoupling}):
$$ x^*(s) = -[\Si(s)y^*(s)+\f(s)], \q z^*(s) = [I_n+\Si(s)R(s)]^{-1}[\Si(s)K(s)^\top y^*(s)-\b(s)],$$
from which we obtain
\begin{align}\label{x-lamada}
x^*(t)  &= -\Si(t)y^*(t) - \f(t) \nn\\
        &= -\Si(t)[I_n+G\Si(t)]^{-1}[F^\top\l^*-G\f(t)] - \f(t) \nn\\
        &= -[I_n+\Si(t)G]^{-1}\Si(t)[F^\top\l^*-G\f(t)] - \f(t) \nn\\
        &= -[I_n+\Si(t)G]^{-1}\Si(t)F^\top\l^* + [I_n+\Si(t)G]^{-1}\Si(t)G\f(t) - \f(t) \nn\\
        &= -[I_n+\Si(t)G]^{-1}\Si(t)F^\top\l^* - [I_n+\Si(t)G]^{-1}\f(t).
\end{align}
According to \autoref{prop:relation}, the optimal control
$$v^*(s) =N(s)^{-1}L(s)^\top y^*(s), \q s\in[t,T]$$
of Problem (BLQ)$_{\l^*}$ is also optimal for\autoref{CBLQ} if and only if $x^*(t)\in\cH(F,b)$.
Using \rf{x-lamada}, we see the latter holds if and only if $\l^*$ is a solution of \rf{Eqn-lamda}.
$\hfill\qed$

\end{document}